\newtheorem{Theorem}{Theorem}[section]
\theoremstyle{plain}
\newtheorem{Definition}[Theorem]{Definition}
\newtheorem{Corollary}[Theorem]{Corollary}
\newtheorem{Proposition}[Theorem]{Proposition}
\numberwithin{equation}{section}
\title{A note on the polynomial moments of the partition function in the SK model}
\author{Sergey Bocharov}
\begin{document}
\maketitle
\begin{abstract}
We prove a simple identity relating the $k$th moment of the partition function $Z_N(\cdot)$ in the SK model 
to the $N$th moment of the partition function $Z_k(\cdot)$. As a corollary we find a characterisation of the limit 
$\lim_{N \to \infty} \frac{1}{N} \log \mathbb{E} Z_N(\beta)^k$ alternative to the one found previously by Michel Talagrand in \cite{4}.
\end{abstract}
\section{Introduction and Main Results}
For the Sherrington-Kirkpatrick model we are given a parameter $\beta > 0$ referred to as the inverse temperature, an integer $N \geq 2$ for 
the number of $\pm 1$ spins, vectors $\sigma = (\sigma_1, \cdots, \sigma_N) \in \{-1, 1\}^N$ representing different configurations of $N$ spins 
and independent standard normal random variables $g_{i j }$, $1 \leq i < j \leq N$, which describe the interactions between the spins. 

The partition function for the Sherrington-Kirkpatrick model is defined as 
\[
Z_N(\beta) := \sum_{\sigma \in \{-1, 1\}^N} \exp \Big\{ \frac{\beta}{\sqrt{N}} \sum_{1 \leq i < j \leq N} g_{i j} \sigma_i \sigma_j \Big\} \text{.}
\]
One interesting problem related to $Z_N(\beta)$ is to study the asymptotic behaviour of its various moments as $N \to \infty$. In particular the famous Parisi formula 
proved by M. Talagrand in \cite{3} characterises $\lim_{N \to \infty} \frac{1}{N} \mathbb{E} \log Z_N(\beta)$ as a solution of a certain deterministic optimisation problem. In his later paper 
\cite{4} Talagrand has also given a similar representation of $\lim_{N \to \infty} \frac{1}{N} \log \mathbb{E} Z_N(\beta)^a$ for all $a \in \mathbb{R}$, which we shall discuss in Section 4. 

In this paper we shall study $\mathbb{E} Z_N(\beta)^k$, $k \in \mathbb{N}$ proving an identity which relates $\mathbb{E} Z_N(\cdot)^k$ to 
$\mathbb{E} Z_k(\cdot)^N$. As a corollary we get a characterisation of $\lim_{N \to \infty} \frac{1}{N} \log \mathbb{E} Z_N(\beta)^k$ alternative to that given in 
\cite{4}.

Let us now present our main results. Firstly we claim that $k$th moments of the partition function satisfy the following relation. 
\begin{Proposition}
\label{prop}
For $k \geq 2$
\begin{equation}
\label{moments}
\mathbb{E} \Big[ Z_N(\beta) \Big]^k = \mathrm{e}^{\frac{\beta^2 k}{4} N - \frac{\beta^2 k^2}{4}} \mathbb{E} \Big[ Z_k(\beta \sqrt{\frac{k}{N}}) \Big]^N
\end{equation}
Or, equivalently,
\begin{equation}
\label{moments2}
\mathbb{E} \Big[ \mathrm{e}^{\frac{\beta^2}{4}} Z_N(\frac{\beta}{\sqrt{k}}) \Big]^k = 
\mathbb{E} \Big[ \mathrm{e}^{\frac{\beta^2}{4}} Z_k(\frac{\beta}{\sqrt{N}}) \Big]^N \text{.}
\end{equation}
\end{Proposition}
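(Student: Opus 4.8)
The plan is to evaluate both sides of (\ref{moments}) by the standard ``replica'' device — expanding the integer power as a sum over configurations and integrating out the Gaussian disorder — and then to observe that the two resulting expressions are literally the \emph{same} sum, written in two different ways. First I would expand
\[
\mathbb{E}\big[Z_N(\beta)\big]^k = \sum_{\sigma^{(1)},\dots,\sigma^{(k)}} \mathbb{E}\exp\Big\{\frac{\beta}{\sqrt N}\sum_{i<j} g_{ij}\sum_{a=1}^k \sigma_i^{(a)}\sigma_j^{(a)}\Big\},
\]
where each replica $\sigma^{(a)} \in \{-1,1\}^N$. Since the $g_{ij}$ are independent standard normals, applying $\mathbb{E}\, \mathrm{e}^{tg} = \mathrm{e}^{t^2/2}$ to each factor turns this into
\[
\sum_{\sigma^{(1)},\dots,\sigma^{(k)}}\exp\Big\{\frac{\beta^2}{2N}\sum_{i<j}\Big(\sum_{a=1}^k \sigma_i^{(a)}\sigma_j^{(a)}\Big)^2\Big\}.
\]

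The key algebraic step is to rewrite the exponent in terms of the (unnormalised) overlaps $q_{ab} := \sum_{i=1}^N \sigma_i^{(a)}\sigma_i^{(b)}$. I would symmetrise the sum over $i<j$ into $\tfrac12\sum_{i\neq j}$, exchange the order of summation, and recognise $\sum_{i,j}\big(\sum_a\sigma_i^{(a)}\sigma_j^{(a)}\big)^2 = \sum_{a,b}q_{ab}^2$. The diagonal contributions — from $i=j$ in the symmetrised double sum, and from $a=b$ (where $q_{aa}=N$) in $\sum_{a,b}q_{ab}^2$ — are deterministic and combine to produce exactly the factor $\mathrm{e}^{\frac{\beta^2 k}{4}N - \frac{\beta^2 k^2}{4}}$, leaving behind $\exp\{\frac{\beta^2}{2N}\sum_{a<b}q_{ab}^2\}$ summed over all replicas.

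Now comes the duality that closes the argument. A family $(\sigma^{(a)})_{a\le k}$ of $k$ spin vectors in $\{-1,1\}^N$ is nothing but an $N\times k$ sign matrix, which may equally be read as a family $(\tau^{(i)})_{i\le N}$ of $N$ spin vectors in $\{-1,1\}^k$ via $\tau^{(i)}_a = \sigma_i^{(a)}$; under this identification the overlaps are preserved, $q_{ab} = \sum_{i=1}^N \tau^{(i)}_a\tau^{(i)}_b$. Running the identical replica-plus-Gaussian computation for $\mathbb{E}[Z_k(\gamma)]^N$ — now with $N$ replicas living in $\{-1,1\}^k$ — gives $\sum \exp\{\frac{\gamma^2}{2k}\sum_{a<b}q_{ab}^2\}$ over the \emph{same} set of matrices (here the deterministic prefactor is trivial since $k<N$ plays the role previously played by $N$, and I would keep track of it symmetrically). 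Matching the two exponents via $\frac{\gamma^2}{2k} = \frac{\beta^2}{2N}$ forces $\gamma = \beta\sqrt{k/N}$ and yields (\ref{moments}) exactly. Finally (\ref{moments2}) follows by substituting $\beta \mapsto \beta/\sqrt k$ into (\ref{moments}) and collecting the resulting prefactors $\mathrm{e}^{\beta^2 k/4}$ and $\mathrm{e}^{\beta^2 N/4}$.

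I expect the only real obstacle to be bookkeeping rather than depth: one must carefully isolate the diagonal ($a=b$ and $i=j$) terms on each side so that the deterministic factor comes out with the precise exponent $\frac{\beta^2 k}{4}N - \frac{\beta^2 k^2}{4}$, and keep the two normalisations $1/N$ and $1/k$ straight. The conceptual content is just the $N\leftrightarrow k$ symmetry of the sign matrix through the overlaps; once that is in place, no estimation or limiting procedure is required and the identity is exact for every finite $N,k$.
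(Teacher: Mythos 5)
Your proposal is correct and follows essentially the same route as the paper: the same replica expansion and Gaussian integration, the same algebraic identity exchanging site pairs $i<j$ with replica pairs $u<v$ (which is precisely where the prefactor $\mathrm{e}^{\frac{\beta^2 k}{4}N - \frac{\beta^2 k^2}{4}}$ comes from), and the same reading of the $N \times k$ sign matrix by rows instead of columns. The only cosmetic difference is organizational: the paper re-introduces fresh Gaussians $g'_{uv}$ to convert the overlap sum back into $\mathbb{E}\big[Z_k(\beta\sqrt{k/N})\big]^N$ directly, whereas you compute both moments forward to their overlap forms and match exponents in the middle --- the same calculation read in two directions, and your matching condition $\gamma = \beta\sqrt{k/N}$ is exactly the paper's rescaled temperature.
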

So, for example 
\begin{align*}
\mathbb{E} \Big[ Z_N(\beta) \Big]^2 &= \mathrm{e}^{\frac{\beta^2}{2}N - \beta^2} 4^N \mathbb{E} \Big( \cosh \big(\frac{\beta \sqrt{2}}{\sqrt{N}} g_{12}\big)\Big)^N \text{,}\\
\mathbb{E} \Big[ Z_N(\beta) \Big]^3 &= \mathrm{e}^{\frac{3}{4}\beta^2 N - \frac{9}{4} \beta^2} 2^N \mathbb{E} \Big( \big(
\mathrm{e}^{\frac{\beta \sqrt{3}}{\sqrt{N}} (g_{12} + g_{13} + g_{23})} + \mathrm{e}^{\frac{\beta \sqrt{3}}{\sqrt{N}} (g_{12} - g_{13} - g_{23})}\\ 
& \qquad\qquad\qquad\qquad + \mathrm{e}^{\frac{\beta \sqrt{3}}{\sqrt{N}} (-g_{12} - g_{13} + g_{23})} + 
\mathrm{e}^{\frac{\beta \sqrt{3}}{\sqrt{N}} (-g_{12} + g_{13} - g_{23})}\big)\Big)^N \text{.}
\end{align*}
One advantage of identity \eqref{moments} is that when $k$ is fixed and $N \to \infty$, $Z_k(\cdot)$ on the right hand side is a sum of 
$2^k$ terms and hence the multinomial expansion of $Z_k(\cdot)^N$ gives a number of terms, which grows 
like a polynomial in $N$. This leads to the following result. 
\begin{Corollary}
\label{cor}
\begin{equation}
\label{log_moments}
\lim_{N \to \infty} \frac{1}{N} \log \mathbb{E} \Big[ Z_N(\beta) \Big]^k = \max_{p_{\sigma}} \Big\{ - \sum_{\sigma \in \{-1, 1\}^k} p_{\sigma} \log p_{\sigma} 
+ \frac{\beta^2}{4} \sum_{\sigma, \sigma' \in\{-1, 1\}^k} p_{\sigma} p_{\sigma'} (\sigma \cdot \sigma')^2 \Big\} \text{,}
\end{equation}
where the maximum is taken over $2^k$ non-negative real numbers $p_\sigma$ indexed by $\sigma \in \{-1, 1\}^k$ such that $\sum_{\sigma} p_{\sigma} = 1$.
\end{Corollary}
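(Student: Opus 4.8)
The plan is to deduce the Corollary directly from Proposition~\ref{prop} by taking logarithms in \eqref{moments}, dividing by $N$, and letting $N \to \infty$. Writing $\gamma_N := \beta\sqrt{k/N}$, identity \eqref{moments} gives
\[
\frac{1}{N}\log \E\big[Z_N(\beta)\big]^k = \frac{\beta^2 k}{4} - \frac{\beta^2 k^2}{4N} + \frac{1}{N}\log \E\big[Z_k(\gamma_N)\big]^N,
\]
so the middle term vanishes in the limit and everything reduces to understanding the last term. The point of writing things this way, as the discussion after the Proposition emphasises, is that $Z_k(\gamma_N)$ is a finite sum of $2^k$ exponentials with $\gamma_N/\sqrt{k} = \beta/\sqrt{N}$, so I can expand its $N$th power explicitly.

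First I would expand the $N$th power as a sum over $N$ replicas $\sigma^1,\dots,\sigma^N \in \{-1,1\}^k$, use the fact that the disorder $g_{ij}$ is shared by all replicas, and integrate out the Gaussians via $\E \mathrm{e}^{t g} = \mathrm{e}^{t^2/2}$. This yields
\[
\E\big[Z_k(\gamma_N)\big]^N = \sum_{\sigma^1,\dots,\sigma^N} \exp\Big\{\frac{\beta^2}{2N}\sum_{i<j}\Big(\sum_{a=1}^N \sigma^a_i \sigma^a_j\Big)^2\Big\}.
\]
Grouping replicas by their value, i.e. letting $n_\sigma$ count how many $a$ satisfy $\sigma^a = \sigma$, the exponent depends only on the counts $(n_\sigma)_{\sigma\in\{-1,1\}^k}$. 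The key algebraic step is the identity $\sum_{i<j}\sigma_i\sigma_j\sigma'_i\sigma'_j = \tfrac12[(\sigma\cdot\sigma')^2 - k]$, obtained by writing $u_i := \sigma_i\sigma'_i \in \{-1,1\}$ and using $\sum_{i<j}u_iu_j = \tfrac12[(\sum_i u_i)^2 - \sum_i u_i^2]$. Substituting and summing the $-k$ contribution (which gives $-kN^2/2$ since the counts sum to $N$) produces the clean expression
\[
\E\big[Z_k(\gamma_N)\big]^N = \mathrm{e}^{-\frac{\beta^2 k N}{4}} \sum_{(n_\sigma)} \frac{N!}{\prod_\sigma n_\sigma!}\,\exp\Big\{\frac{\beta^2 N}{4}\sum_{\sigma,\sigma'} \frac{n_\sigma}{N}\frac{n_{\sigma'}}{N}(\sigma\cdot\sigma')^2\Big\},
\]
where the multinomial coefficient counts the replica assignments with the given counts.

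Setting $p_\sigma = n_\sigma/N$ and applying Stirling, each summand behaves like $\exp\{N F(p)\}$ with $F(p) = -\sum_\sigma p_\sigma \log p_\sigma + \tfrac{\beta^2}{4}\sum_{\sigma,\sigma'}p_\sigma p_{\sigma'}(\sigma\cdot\sigma')^2$. Since there are only polynomially many count vectors (at most $(N+1)^{2^k}$) and $F$ is continuous on the compact simplex, a standard Laplace/large-deviations argument shows $\frac1N\log$ of the sum converges to $\max_p F(p)$: the upper bound follows from the entropy inequality $\binom{N}{(n_\sigma)} \le \exp\{N H(n/N)\}$ together with the polynomial count of terms, and the lower bound from evaluating the single term at integer counts approximating any chosen maximiser. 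Combining this with the $\mathrm{e}^{-\beta^2 k N/4}$ prefactor and the $\frac{\beta^2 k}{4}$ from Proposition~\ref{prop}, the two $\pm\frac{\beta^2 k}{4}$ contributions cancel exactly, leaving precisely the right-hand side of \eqref{log_moments}. I expect the main thing to watch is the bookkeeping of constants — in particular checking that the $-k$ term in the algebraic identity produces exactly the $-\beta^2 kN/4$ needed to cancel the Proposition's prefactor — while the Laplace step itself is routine given the clean entropy bound.
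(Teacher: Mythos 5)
Your proposal is correct and follows essentially the same route as the paper: starting from Proposition \ref{prop}, expanding $Z_k(\cdot)^N$ multinomially (your replica-counting over $\sigma^1,\dots,\sigma^N$ grouped by counts $n_\sigma$ is exactly that expansion), integrating out the shared Gaussians, and running a Stirling/Laplace argument over the polynomially many count vectors. The only differences are cosmetic: you apply the identity $\sum_{i<j}\sigma_i\sigma_j\sigma'_i\sigma'_j = \tfrac{1}{2}\big[(\sigma\cdot\sigma')^2-k\big]$ before the Laplace step (the paper converts to the $(\sigma\cdot\sigma')^2$ form only at the very end), and you use the entropy bound on the multinomial coefficient for the upper bound where the paper uses the two-sided Stirling estimate throughout.
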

The parameters $p_{\sigma}$, $\sigma \in \{-1, 1\}^k$ in \eqref{log_moments} above are most naturally interpreted as a probability mass function over 
$\{-1, 1\}^k$ and $ - \sum_{\sigma} p_{\sigma} \log p_{\sigma}$ as its information entropy (up to a multiplicative constant of $\log 2$). So let us define the following notation. 
\begin{Definition}
\label{pmf_entropy}$ $
\begin{itemize}
\item $\Lambda_k$ is the set of all the probability mass functions on $\{-1, 1\}^k$.
\[
\Lambda_k := \Big\{( p_{\sigma})_{\sigma \in \{-1, 1\}^k} \ : \ p_{\sigma} \geq 0 \ \forall \ \sigma \in \{-1, 1\}^k \ , \ \sum_{\sigma \in \{-1, 1\}^k} p_{\sigma} = 1 \Big\}
\]
\item The entropy of $\mathcal{L} \in \Lambda_k$, denoted by  $\mathcal{E}(\mathcal{L})$, is 
\[
\mathcal{E}(\mathcal{L}) := - \sum_{\sigma \in \{-1, 1\}^k} \mathcal{L}(\sigma) \log_2 \mathcal{L}(\sigma)
\]
\end{itemize}
\end{Definition}
In view of the above definition equation \eqref{log_moments} can also be written as
\begin{equation}
\label{log_moments2}
\lim_{N \to \infty} \frac{1}{N} \log \mathbb{E} \Big[ Z_N(\beta) \Big]^k = \max_{\mathcal{L} \in \Lambda_k} \Big\{ \log 2 \ \mathcal{E}(\mathcal{L})
+ \frac{\beta^2}{4} E^{\mathcal{L}} \Big( V \cdot V' \Big)^2 \Big\} \text{,}
\end{equation}
where $V \cdot V'$ is the scalar product of two independent identically-distributed random vectors $V$ and $V' \ \in \{-1, 1\}^k$ with probability mass function $\mathcal{L}$ 
and $E^{\mathcal{L}}$ is the expectation with respect to randomness over $\{-1, 1\}^k$.

The rest of this article is organised as follows. In Section 2 we discuss some basic properties of the maximisation problem 
\eqref{log_moments}-\eqref{log_moments2}. In Section 3 we prove Proposition \ref{prop} and Corollary \ref{cor}. And in Section 4 
we show how our results relate to those of Talagrand in \cite{4}.
\section{Some Discussion}
For shortness let us denote the quantity we wish to maximise in \eqref{log_moments} by $F(\cdot)$:
\[
F \Big( (p_{\sigma})_{\sigma \in \{-1, 1\}^k} \Big) := - \sum_{\sigma \in \{-1, 1\}^k} p_{\sigma} \log p_{\sigma} + \frac{\beta^2}{4} 
\sum_{\sigma , \sigma' \in \{-1, 1\}^k} p_{\sigma} p_{\sigma '} (\sigma \cdot \sigma')^2 \text{.}
\]
Firstly, let us observe that the maximisers of $F(\cdot)$ do not lie on the boundary of the hyperplane $\Lambda_k$ 
given by $\Big\{( p_{\sigma})_{\sigma \in \{-1, 1\}^k} \in \Lambda_k \ : \ p_{\sigma_0} = 0$ for some $\sigma_0 \in \{-1, 1\}^k \Big\}$ 
(this is not true in the degenerate case $\beta = \infty$ as we shall see later).
\begin{Proposition}[Maximisers of $F(\cdot)$ are local maxima]
\label{boundary}
Suppose $(p_{\sigma}) \in \Lambda_k$ and there exists $\sigma_0 \in \{-1, 1\}^k$ such that $p_{\sigma_0} = 0$. Then there exists 
$(q_{\sigma}) \in \Lambda_k$ such that $F\big( (q_{\sigma})\big) > F\big( (p_{\sigma})\big)$.
\end{Proposition}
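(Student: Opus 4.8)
The plan is to exploit the fact that the entropy part of $F$ has infinite one-sided slope at the boundary of $\Lambda_k$, whereas the interaction part is smooth: pushing an arbitrarily small amount of mass onto the vanishing coordinate $\sigma_0$ will increase the entropy by a term of order $\eps \log(1/\eps)$, which swamps the $O(\eps)$ change coming from everything else. Concretely, since $\sum_\sigma p_\sigma = 1$ there must exist some $\sigma_1 \in \{-1,1\}^k$ with $p_{\sigma_1} > 0$. For small $\eps > 0$ I would define $(q_\sigma) \in \Lambda_k$ by transferring mass $\eps$ from $\sigma_1$ to $\sigma_0$: set $q_{\sigma_0} = \eps$, $q_{\sigma_1} = p_{\sigma_1} - \eps$, and $q_\sigma = p_\sigma$ for all other $\sigma$. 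For $0 < \eps < p_{\sigma_1}$ all entries are non-negative and the total is still $1$, so $(q_\sigma) \in \Lambda_k$.

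Next I would compute $F\big((q_\sigma)\big) - F\big((p_\sigma)\big)$ by treating the two parts of $F$ separately. In the entropy term $-\sum_\sigma p_\sigma \log p_\sigma$ only the $\sigma_0$ and $\sigma_1$ summands change. The $\sigma_0$ summand goes from $0$ (using $0 \log 0 = 0$) to $-\eps \log \eps$, while the $\sigma_1$ summand changes by an amount that is $O(\eps)$, since the map $t \mapsto -t \log t$ is differentiable at $t = p_{\sigma_1} > 0$. Hence the entropy contribution to the difference is $-\eps \log \eps + O(\eps)$.

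For the interaction term, the vectors $(q_\sigma)$ and $(p_\sigma)$ differ by at most $\eps$ in each coordinate, and the map $(p_\sigma) \mapsto \sum_{\sigma,\sigma'} p_\sigma p_{\sigma'} (\sigma \cdot \sigma')^2$ is a quadratic polynomial whose coefficients are bounded (indeed $(\sigma \cdot \sigma')^2 \leq k^2$) on the bounded set $\Lambda_k$; therefore its value changes by $O(\eps)$ as well. Combining the two estimates gives
\[
F\big((q_\sigma)\big) - F\big((p_\sigma)\big) = \eps \log\tfrac{1}{\eps} + O(\eps) \text{.}
\]

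The crux of the argument is then the elementary fact that $\eps \log(1/\eps)$ dominates any $O(\eps)$ term as $\eps \to 0^+$, so the right-hand side is strictly positive for all sufficiently small $\eps > 0$. Choosing such an $\eps$ yields $(q_\sigma) \in \Lambda_k$ with $F\big((q_\sigma)\big) > F\big((p_\sigma)\big)$, as required. I do not anticipate any genuine obstacle here, since this is the standard "infinite entropy slope at the boundary" phenomenon; the only point deserving a little care is making the $O(\eps)$ bound on the interaction term explicit, but that is routine.
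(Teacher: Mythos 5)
Your proposal is correct and is essentially identical to the paper's proof: the same mass-transfer construction ($q_{\sigma_0}=\eps$, $q_{\sigma_1}=p_{\sigma_1}-\eps$) and the same observation that the entropy gain $-\eps\log\eps$ dominates the $O(\eps)$ change in the remaining terms. Your write-up merely spells out the $O(\eps)$ bounds that the paper leaves implicit.
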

\begin{proof}
Let $(p_{\sigma}) \in \Lambda_k$ be such that $p_{\sigma_0} = 0$ for some $\sigma_0 \in \{-1, 1\}^k$ and let $\sigma_1 \in \{-1, 1\}^k$ 
be such that $p_{\sigma_1} > 0$. 

For a small $\epsilon$ ($\epsilon < p_{\sigma_1}$) define  $q_{\sigma_0} := \epsilon$, $q_{\sigma_1} := p_{\sigma_1} - \epsilon$ and $q_{\sigma} := p_{\sigma}$ for all 
$\sigma \neq \sigma_0, \sigma_1$. Then
\[
F\big( (q_{\sigma})\big) - F\big( (p_{\sigma})\big) = - \epsilon \log \epsilon + O(\epsilon)
\]
and thus for $\epsilon$ sufficiently small $F\big( (q_{\sigma})\big) - F\big( (p_{\sigma})\big) > 0$.
\end{proof}
Thus any maximiser of $F(\cdot)$ must lie inside the hyperplane $\Lambda_k$ and in principle can be found using the method 
of Lagrangian multipliers.
\begin{Proposition}[Symmetry of the maximiser]
\label{symmetry}
Let $(p_{\sigma})_{\sigma \in \{-1, 1\}^k}$ be a maximiser of $F(\cdot)$. Then $p_{\sigma} = p_{- \sigma}$ for all $\sigma \in \{-1, 1\}^k$.
\end{Proposition}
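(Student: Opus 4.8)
The plan is to exploit the obvious invariance of the quadratic form under the global spin flip $\sigma \mapsto -\sigma$, combined with the first-order (Lagrange) conditions that any maximiser must satisfy. The crucial preliminary is Proposition \ref{boundary}: since no maximiser can have a vanishing coordinate, any maximiser $(p_\sigma)$ lies in the relative interior of $\Lambda_k$, where $F$ is differentiable (the only non-smoothness of the entropy occurs at the boundary $p_\sigma = 0$). Hence I may treat the maximiser as an interior critical point of $F$ subject to the single linear constraint $\sum_\sigma p_\sigma = 1$ and write down the Lagrange stationarity equations directly.

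Concretely, introducing a multiplier $\lambda$ for the constraint and differentiating $F$ in $p_\tau$, I would obtain, for every $\tau \in \{-1,1\}^k$,
\[
-\log p_\tau - 1 + \frac{\beta^2}{2} \sum_{\sigma' \in \{-1,1\}^k} p_{\sigma'}\, (\tau \cdot \sigma')^2 = \lambda,
\]
where the factor $\tfrac{\beta^2}{2}$ arises from the symmetry $(\sigma \cdot \sigma')^2 = (\sigma' \cdot \sigma)^2$ of the quadratic form. Solving for $p_\tau$ gives the self-consistency relation
\[
p_\tau = \exp\Big( -1 - \lambda + \frac{\beta^2}{2}\, h(\tau) \Big), \qquad h(\tau) := \sum_{\sigma' \in \{-1,1\}^k} p_{\sigma'}\, (\tau \cdot \sigma')^2 .
\]

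The conclusion is then immediate from a one-line symmetry observation: since $((-\tau) \cdot \sigma')^2 = (-(\tau \cdot \sigma'))^2 = (\tau \cdot \sigma')^2$, the effective field satisfies $h(-\tau) = h(\tau)$ for every $\tau$. Feeding this into the self-consistency relation yields $p_{-\tau} = p_\tau$, which is exactly the claim.

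I expect the only genuine obstacle to be the rigorous justification of the stationarity step rather than the symmetry itself. One should note in particular that the naive alternative—symmetrising $(p_\sigma)$ to $\tfrac12(p_\sigma + p_{-\sigma})$ and invoking convexity—does not work, because the quadratic form $\sum_{\sigma,\sigma'} p_\sigma p_{\sigma'}(\sigma\cdot\sigma')^2$ is a Gram form (with $(\sigma\cdot\sigma')^2 = w_\sigma \cdot w_{\sigma'}$ for $w_\sigma = (\sigma_i\sigma_j)_{i,j}$) and is therefore convex, so it \emph{decreases} under such averaging and competes against the gain in entropy. The Lagrange route avoids this difficulty entirely, which is why I would adopt it; all that remains is to confirm that interiority (Proposition \ref{boundary}) legitimately reduces the problem to the unconstrained stationarity of $F - \lambda\big(\sum_\sigma p_\sigma - 1\big)$ at a point with all $p_\sigma > 0$.
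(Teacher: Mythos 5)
Your proof is correct, but it takes a genuinely different route from the paper's. You go through first-order conditions: Proposition \ref{boundary} rules out zero coordinates, so any maximiser lies in the relative interior of $\Lambda_k$, where $F$ is smooth; since the constraint is affine, no constraint qualification is needed, and stationarity gives $p_\tau = \exp\big(-1-\lambda+\tfrac{\beta^2}{2}h(\tau)\big)$ with $h(\tau) = \sum_{\sigma'} p_{\sigma'}(\tau\cdot\sigma')^2$; the invariance $h(-\tau)=h(\tau)$ then forces $p_{-\tau}=p_\tau$. All steps are sound. The paper instead argues by a direct pairwise symmetrisation: if $p_{\sigma_0}\neq p_{-\sigma_0}$, replace both by their average; the entropy strictly increases while the quadratic form is unchanged, contradicting maximality. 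The paper's argument is more elementary (no calculus, no multiplier) and does not even need Proposition \ref{boundary}; yours has the advantage of exhibiting the Gibbs form of the maximiser, which is precisely the stationarity system \eqref{lagrange0}--\eqref{lagrange} that the paper derives in Section 4 anyway, so your route folds the symmetry statement into that later analysis.

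However, your closing remark is wrong, and it is worth seeing why, because the approach you dismiss is exactly the paper's proof. You claim that under the averaging $q_\sigma = \tfrac12(p_\sigma+p_{-\sigma})$ the quadratic form decreases by convexity and so competes against the entropy gain. In fact it is \emph{exactly invariant}, not merely bounded: in your own Gram representation $(\sigma\cdot\sigma')^2 = w_\sigma\cdot w_{\sigma'}$ with $w_\sigma = (\sigma_i\sigma_j)_{i,j}$, one has $w_{-\sigma}=w_\sigma$, so the vector $\sum_\sigma p_\sigma w_\sigma$, and hence $Q(p)=\big|\sum_\sigma p_\sigma w_\sigma\big|^2$, is unchanged when $p$ is replaced by its flip or by any average of $p$ with its flip; convexity alone would give an inequality, but here the loss term vanishes identically because the flip fixes every Gram vector. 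The same invariance holds for the pairwise symmetrisation used in the paper: the terms involving $p_{\pm\sigma_0}$ enter $Q$ only through the sum $p_{\sigma_0}+p_{-\sigma_0}$, since $(\sigma\cdot\sigma_0)^2=(\sigma\cdot(-\sigma_0))^2$ for every $\sigma$, including $\sigma=\pm\sigma_0$ where both equal $k^2$. So the symmetrisation argument does work, and it is the shorter of the two proofs.
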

\begin{proof}
Suppose that there exists a maximiser of $F(\cdot)$ such that $p_{\sigma_0} \neq p_{- \sigma_0}$ for some $\sigma_0 \in \{-1, 1\}^k$. 
Define $q_{\sigma} := p_{\sigma}$ for all $\sigma \neq \pm \sigma_0$, $q_{\sigma_0} := q_{- \sigma_0} := (p_{\sigma_0} + p_{- \sigma_0})/2$.

It is easy to check that to maximise $-a \log a - b \log b$ subject to $a$, $b \geq 0$, $a + b = c$ one needs to take $a = b  = c/2$ . So 
\[
-q_{\sigma_0} \log q_{\sigma_0} -q_{- \sigma_0} \log q_{- \sigma_0} > -p_{\sigma_0} \log p_{\sigma_0} -p_{- \sigma_0} \log p_{- \sigma_0} 
\]
and therefore
\begin{align*}
F((p_{\sigma})) = &\Big[- \sum_{\sigma \neq \pm \sigma_0} p_{\sigma} \log p_{\sigma} \Big] -p_{\sigma_0} \log p_{\sigma_0} -p_{- \sigma_0} \log p_{- \sigma_0}\\ 
& \quad + \frac{\beta^2}{4} \Big[ \sum_{\sigma, \sigma' \neq \pm \sigma_0} p_{\sigma} p_{\sigma'} (\sigma \cdot \sigma')^2 + 
\big[ 2(p_{\sigma_0} + p_{- \sigma_0}) \sum_{\sigma \neq \pm \sigma_0} p_{\sigma} (\sigma \cdot \sigma_0)^2 \big] + (p_{\sigma_0} + p_{- \sigma_0})^2 k^2 \Big]\\
= &\Big[- \sum_{\sigma \neq \pm \sigma_0} q_{\sigma} \log q_{\sigma} \Big] -p_{\sigma_0} \log p_{\sigma_0} -p_{- \sigma_0} \log p_{- \sigma_0}\\ 
& \quad + \frac{\beta^2}{4} \Big[ \sum_{\sigma, \sigma' \neq \pm \sigma_0} q_{\sigma} q_{\sigma'} (\sigma \cdot \sigma')^2 + 
\big[ 2(q_{\sigma_0} + q_{- \sigma_0}) \sum_{\sigma \neq \pm \sigma_0} q_{\sigma} (\sigma \cdot \sigma_0)^2 \big] + (q_{\sigma_0} + q_{- \sigma_0})^2 k^2 \Big]\\
< &\Big[- \sum_{\sigma \neq \pm \sigma_0} q_{\sigma} \log q_{\sigma} \Big] -q_{\sigma_0} \log q_{\sigma_0} -q_{- \sigma_0} \log q_{- \sigma_0}\\ 
& \quad + \frac{\beta^2}{4} \Big[ \sum_{\sigma, \sigma' \neq \pm \sigma_0} q_{\sigma} q_{\sigma'} (\sigma \cdot \sigma')^2 + 
\big[ 2(q_{\sigma_0} + q_{- \sigma_0}) \sum_{\sigma \neq \pm \sigma_0} q_{\sigma} (\sigma \cdot \sigma_0)^2 \big] + (q_{\sigma_0} + q_{- \sigma_0})^2 k^2 \Big]\\
= &F((q_{\sigma}))
\end{align*}
which contradicts the maximising property of $(p_{\sigma})_{\sigma \in \{-1, 1\}^k}$.
\end{proof}
It is natural to look at $F(\mathcal{L})$ as a weighted sum of the entropy of $\mathcal{L}$ and the expected value of the 
squared scalar product of two independent vectors with p.m.f. $\mathcal{L}$. Then it is easy to find the maximisers of $F(\cdot)$ 
in the two extreme cases when $\beta = 0$ and when $\beta \to \infty$.

In the first case
\[
\lim_{N \to \infty} \frac{1}{N} \log \mathbb{E} \Big[ Z_N(0) \Big]^k =  \log 2 \ \max_{\mathcal{L} \in \Lambda_k} 
\mathcal{E}(\mathcal{L}) = \log 2 \ k \text{,}
\]
since the entropy is known to be uniquely maximised by the uniform distribution (that is, $\mathcal{L}(\sigma) = 2^{-k} \ \forall \sigma$). 

In the second case 
\[
\lim_{N \to \infty} \frac{1}{N} \log \mathbb{E} \Big[ Z_N(\beta) \Big]^k \sim \frac{\beta^2}{4} \max_{\mathcal{L} \in \Lambda_k} E^{\mathcal{L}} \Big( V \cdot V' \Big)^2 
\qquad \text{ as } \beta \to \infty \text{.}
\]
Then since $|V \cdot V'| \leq k$ with equality if and only if $V = \pm V'$ it follows that $E^{\mathcal{L}} ( V \cdot V' )^2 \leq k^2$ with equality if and only if 
$\mathcal{L}$ is concentrated on $\pm \sigma_0$ for any $\sigma_0 \in \{-1, 1\}^k$. So 
\[
\lim_{N \to \infty} \frac{1}{N} \log \mathbb{E} \Big[ Z_N(\beta) \Big]^k \sim \frac{\beta^2}{4} \max_{\mathcal{L} \in \Lambda_k} 
E^{\mathcal{L}} \Big( V \cdot V' \Big)^2 = \frac{\beta^2}{4}k^2 \qquad \text{ as } \beta \to \infty \text{,}
\]
where the maximising p.m.f.'s  satisfy $\mathcal{L}(\sigma_0) + \mathcal{L}(- \sigma_0) = 1$ for any $\sigma_0 \in \{-1, 1\}^k$ and of these p.m.f.'s, 
for the maximisers of $F(\cdot)$, we would prefer the ones with $\mathcal{L}(\sigma_0) = \mathcal{L}(- \sigma_0) = \frac{1}{2}$ because of Proposition \ref{symmetry}.

Thus as $\beta$ varies from $0$ to $\infty$ we would expect the $F(\cdot)$-maximising p.m.f.'s to vary from the uniform distribution 
(lying in the middle of the hyperplane $\Lambda_k$) to the p.m.f.'s  satisfying $\mathcal{L}(\sigma_0) = \mathcal{L}(- \sigma_0) = \frac{1}{2}$ 
for a $\sigma_0 \in \{-1, 1\}^k$ (which lie on the boundary of $\Lambda_k$). Or, in other words, we expect the $F(\cdot)$-maximising 
p.m.f.'s to vary from completely dispersed at $\beta = 0$ to concentrated at $\pm \sigma_0$ at $\beta = \infty$.

This seems to make some physical sense since at infinite temperature ($\beta = 0$) the entropy of a physical system is at its maximum, while 
at $0$ temperature ($\beta = \infty$) the system is frozen. However the exact physical interpretation of the p.m.f.'s 
$(p_{\sigma})_{\sigma \in \{-1, 1\}^k} \in \Lambda_k$ is not clear to us.
\section{Proofs}
In this section we present the proofs of Proposition \ref{prop} and Corollary \ref{cor}
\begin{proof}[Proof of Proposition \ref{prop}]
We have that
\begin{align*}
\mathbb{E} \Big[ Z_N(\beta) \Big]^k &= \mathbb{E} \Big[ 
\sum_{\sigma^1, \cdots, \sigma^k \in \{-1, 1\}^N} \exp \Big\{ \frac{\beta}{\sqrt{N}} \sum_{1 \leq i < j \leq N} g_{i j} 
(\sigma^1_i \sigma^1_j + \cdots + \sigma^k_i \sigma^k_j ) \Big\}\Big]\\
&= \sum_{\sigma^1, \cdots, \sigma^k \in \{-1, 1\}^N} \exp \Big\{
\frac{\beta^2}{2N} \sum_{1 \leq i < j \leq N}(\sigma^1_i \sigma^1_j + \cdots + \sigma^k_i \sigma^k_j )^2 \Big\} \text{,}
\end{align*}
where $\sigma^1$, $ \cdots$, $\sigma^k$ are independent copies of $ \sigma$ and we have taken the expectation of a 
log-normal random variable. Then expanding the square and swapping the order of summation gives the following identity: 
\begin{align*}
\sum_{1 \leq i < j \leq N}(\sigma^1_i \sigma^1_j + \cdots + \sigma^k_i \sigma^k_j )^2 &= 
\sum_{1 \leq i < j \leq N} \Big( k + 2 \sum_{1 \leq u < v \leq k} \sigma^u_i \sigma^u_j \sigma^v_i \sigma^v_j \Big)\\
&= k \frac{N(N-1)}{2} + 2 \Big( \sum_{1 \leq u < v \leq k} \sum_{1 \leq i < j \leq N} 
\sigma^u_i \sigma^v_i \sigma^u_j \sigma^v_j \Big)\\
&= k \frac{N(N-1)}{2} + \Big( \sum_{1 \leq u < v \leq k} \Big[
(\sigma^u_1 \sigma^v_1 + \cdots + \sigma^u_N \sigma^v_N)^2 - N \Big]  \Big)\\
&= k \frac{N(N-1)}{2} - N \frac{k(k-1)}{2} + \sum_{1 \leq u < v \leq k} (\sigma^u_1 \sigma^v_1 + \cdots + \sigma^u_N \sigma^v_N)^2\\
&=\frac{kN^2}{2} - \frac{Nk^2}{2} + \sum_{1 \leq u < v \leq k} (\sigma^u_1 \sigma^v_1 + \cdots + \sigma^u_N \sigma^v_N)^2 \text{.}
\end{align*}
Therefore
\begin{align*}
\mathbb{E} \Big[ Z_N(\beta) \Big]^k &= \sum_{\sigma^1, \cdots, \sigma^k \in \{-1, 1\}^N} \exp \Big\{
\frac{\beta^2}{2N} \sum_{1 \leq i < j \leq N}(\sigma^1_i \sigma^1_j + \cdots + \sigma^k_i \sigma^k_j )^2 \Big\} \\
&= \mathrm{e}^{\frac{\beta^2 kN}{4} - \frac{\beta^2 k^2}{4}} \sum_{\sigma^1, \cdots, \sigma^k \in \{-1, 1\}^N} 
\exp \Big\{\frac{\beta^2}{2N} \sum_{1 \leq u < v \leq k}(\sigma^u_1 \sigma^v_1 + \cdots + \sigma^u_N \sigma^v_N )^2 \Big\}\\
&= \mathrm{e}^{\frac{\beta^2 kN}{4} - \frac{\beta^2 k^2}{4}} \sum_{\sigma_1, \cdots, \sigma_N \in \{-1, 1\}^k} \mathbb{E} \Big(
\exp \Big\{ \frac{\beta}{\sqrt{N}} \sum_{1 \leq u < v \leq k} g'_{u v} (\sigma^u_1 \sigma^v_1 + \cdots + \sigma^u_N \sigma^v_N ) \Big\}\Big)\\
&= \mathrm{e}^{\frac{\beta^2 kN}{4} - \frac{\beta^2 k^2}{4}} \mathbb{E} \Big[ Z_k(\beta \sqrt{\frac{k}{N}}) \Big]^N \text{,}
\end{align*}
where $\sigma_1, \cdots, \sigma_N \in \{-1, 1\}^k$ can be thought of as the rows of the $N \times k$ matrix whose columns are 
$\sigma^1, \cdots, \sigma^k$ and $g'_{u v }$ are  independent standard normal random variables.
\end{proof}
Recall that Corollary \ref{cor} stated that 
\[
\lim_{N \to \infty} \frac{1}{N} \log \mathbb{E} \Big[ Z_N(\beta) \Big]^k = \max_{p_{\sigma}} \Big\{ - \sum_{\sigma \in \{-1, 1\}^k} p_{\sigma} \log p_{\sigma} 
+ \frac{\beta^2}{4} \sum_{\sigma, \sigma' \in\{-1, 1\}^k} p_{\sigma} p_{\sigma'} (\sigma \cdot \sigma')^2 \Big\} \text{,}
\]
where the maximum is taken over $2^k$ non-negative real numbers $p_\sigma$ indexed by $\sigma \in \{-1, 1\}^k$ such that $\sum_{\sigma} p_{\sigma} = 1$.
\begin{proof}[Proof of Corollary \ref{cor}]
In this proof we shall only work with the space $\{-1, 1\}^k$ and we shall use the following simplified notation to make formulae more compact: 
\begin{itemize}
\item $\sum_{\sigma}$ and $\prod_{\sigma}$ will stand for the sum and the product over all $\sigma \in \{-1, 1\}^k$. Likewise 
$\sum_{\sigma , \sigma'}$ will stand for the sum over all $\sigma$, $\sigma' \in \{-1, 1\}^k$.
\item $\sum_{i<j}$ will stand for the sum over all $i, j \in \mathbb{N}$ such that $1 \leq i < j \leq k$.
\item $\sum_{i_{\sigma}}$ and $\max_{i_{\sigma}}$ will stand for the sum and the maximum over all combinations of non-negative 
integers $i_{\sigma}$, $\sigma \in \{-1, 1\}^k$ satisfying $\sum_{\sigma \in \{-1, 1\}^k} i_{\sigma} = N$.
\item $\max_{p_{\sigma}}$ will stand for the maximum over all combinations of non-negative reals $p_{\sigma}$, $\sigma \in \{-1, 1\}^k$ satisfying 
$\sum_{\sigma \in \{-1, 1\}^k} p_{\sigma} = 1$.
\end{itemize}
Starting with \eqref{moments} and applying the multinomial expansion to $Z_k(\cdot)^N$ we get
\begin{align*}
\mathbb{E} \Big[ Z_N(\beta) \Big]^k &= \mathrm{e}^{\frac{\beta^2 k}{4} N - \frac{\beta^2 k^2}{4}} \mathbb{E} \Big[ Z_k\Big(\beta \sqrt{\frac{k}{N}}\Big) \Big]^N\\
&= \mathrm{e}^{\frac{\beta^2 k}{4} N - \frac{\beta^2 k^2}{4}} \mathbb{E} \Big[ \sum_{\sigma} \exp \Big\{ \frac{\beta}{\sqrt{N}} 
\sum_{i < j} g_{i j} \sigma_i \sigma_j \Big\} \Big]^N\\
&= \mathrm{e}^{\frac{\beta^2 k}{4} N - \frac{\beta^2 k^2}{4}} \mathbb{E} \Big[ \sum_{i_{\sigma}} \frac{N!}{\prod_{\sigma}i_{\sigma}!} 
\prod_{\sigma} \exp \Big\{ i_{\sigma} \frac{\beta}{\sqrt{N}} \sum_{i < j} g_{i j} \sigma_i \sigma_j \Big\} \Big] \text{,}
\end{align*}
where the first summation in the last line is over all the combinations of non-negative integers $i_{\sigma}$, $\sigma \in \{-1, 1\}^k$ such that $\sum_{\sigma \in \{-1, 1\}^k} i_{\sigma} = N$. 
Then moving the product into the exponential and taking the expectation (of a log-normal) gives 
\begin{align*}
&\mathrm{e}^{\frac{\beta^2 k}{4} N - \frac{\beta^2 k^2}{4}} \mathbb{E} \Big[ \sum_{ i_{\sigma}} \frac{N!}{\prod_{\sigma}i_{\sigma}!} 
\prod_{\sigma} \exp \Big\{ i_{\sigma} \frac{\beta}{\sqrt{N}} \sum_{i < j} g_{i j} \sigma_i \sigma_j \Big\} \Big]\\
=&\mathrm{e}^{\frac{\beta^2 k}{4} N - \frac{\beta^2 k^2}{4}} \mathbb{E} \Big[ \sum_{i_{\sigma}} \frac{N!}{\prod_{\sigma}i_{\sigma}!} 
\exp \Big\{ \frac{\beta}{\sqrt{N}} \sum_{i < j} g_{i j} \sum_{\sigma} i_{\sigma} \sigma_i \sigma_j \Big\} \Big]\\
 =&\mathrm{e}^{\frac{\beta^2 k}{4} N - \frac{\beta^2 k^2}{4}}\sum_{i_{\sigma}} \frac{N!}{\prod_{\sigma}i_{\sigma}!} 
\exp \Big\{ \frac{\beta^2}{2N} \sum_{i < j} \Big(\sum_{\sigma} i_{\sigma} \sigma_i \sigma_j\Big)^2 \Big\} \text{.}
\end{align*}
Thus we have shown so far that 
\begin{equation}
\label{eq1}
\mathbb{E} \Big[ Z_N(\beta) \Big]^k = \sum_{i_{\sigma}} \frac{N!}{\prod_{\sigma}i_{\sigma}!} \exp \Big\{ \frac{\beta^2 k}{4}N - 
\frac{\beta^2 k^2}{4}+ \frac{\beta^2}{2N} \sum_{i < j} \Big(\sum_{\sigma} i_{\sigma} \sigma_i \sigma_j\Big)^2 \Big\} \text{.}
\end{equation}
Let us now prove that
\begin{equation}
\label{eq2}
\lim_{N \to \infty} \frac{1}{N} \log \mathbb{E} \Big[ Z_N(\beta) \Big]^k = \max_{p_{\sigma}} \Big\{ - \sum_{\sigma} p_{\sigma} \log p_{\sigma} + \frac{\beta^2 k}{4} 
+ \frac{\beta^2}{2} \sum_{i < j} \Big( \sum_{\sigma} p_{\sigma} \sigma_i \sigma_j \Big)^2 \Big\} \text{,}
\end{equation}
where the maximum is taken over all vectors $(p_{\sigma})_{\sigma \in \{-1, 1\}^k}$ with non-negative real entries such that $\sum_{\sigma \in \{-1, 1\}^k} p_{\sigma} = 1$.

To deal with the multinomial coefficient in \eqref{eq1} we are going to use the following well-known form of Stirling's approximation:
\begin{equation}
\label{stirling}
 n^{n + 1/2} \mathrm{e}^{-n} \leq n! \leq \mathrm{e} \ n^{n + 1/2} \mathrm{e}^{-n} \qquad \forall n \in \mathbb{N} \text{.}
\end{equation}
Let us begin with proving the upper bound of \eqref{eq2}. Observe that the summation $\sum_{i_{\sigma}}$ in \eqref{eq1} has 
$\binom{N + 2^k - 1}{2^k - 1}$ terms and thus 
\begin{align*}
\mathbb{E} \Big[ Z_N(\beta) \Big]^k &\leq \binom{N + 2^k - 1}{2^k - 1} \max_{i_{\sigma}} \Big\{ 
\frac{N!}{\prod_{\sigma}i_{\sigma}!} \exp \Big\{ \frac{\beta^2 k}{4}N - \frac{\beta^2 k^2}{4} 
+ \frac{\beta^2}{2N} \sum_{i < j} \Big(\sum_{\sigma} i_{\sigma} \sigma_i \sigma_j\Big)^2 \Big\} \Big\}\\
&\leq \big( N + 2^k \big)^{2^k} \max_{i_{\sigma}} \Big\{ \frac{\mathrm{e} \ N^{N+1/2} \mathrm{e}^{-N}}{
\prod_{\sigma : i_{\sigma} > 0}i_{\sigma}^{i_{\sigma} + 1/2} \mathrm{e}^{-i_{\sigma}} }\exp \Big\{ \frac{\beta^2 k}{4}N - \frac{\beta^2 k^2}{4}\\
& \qquad\qquad\qquad\qquad\qquad\qquad\qquad\qquad\qquad\qquad\qquad + \frac{\beta^2}{2N} \sum_{i < j} \Big(\sum_{\sigma} i_{\sigma} \sigma_i \sigma_j\Big)^2 \Big\} \Big\}\\
&\leq \big( N + 2^k \big)^{2^k} \mathrm{e} \ \max_{i_{\sigma}} \Big\{ \frac{1}{\prod_{\sigma : i_{\sigma} > 0}i_{\sigma}^{1/2}}\Big\}
\max_{i_{\sigma}} \Big\{ \frac{N^{N+1/2}}{\prod_{\sigma}i_{\sigma}^{i_{\sigma}}}\exp \Big\{ \frac{\beta^2 k}{4}N - \frac{\beta^2 k^2}{4}\\
& \qquad\qquad\qquad\qquad\qquad\qquad\qquad\qquad\qquad\qquad\qquad + \frac{\beta^2}{2N} \sum_{i < j} \Big(\sum_{\sigma} i_{\sigma} \sigma_i \sigma_j\Big)^2 \Big\} \Big\}\\ \text{.}
\end{align*}
Since $\max_{i_{\sigma}} \Big\{ \frac{1}{\prod_{\sigma : i_{\sigma} > 0}i_{\sigma}^{1/2}}\Big\} \leq 1$ we have
\begin{align*}
\mathbb{E} \Big[ Z_N(\beta) \Big]^k &\leq \big( N + 2^k \big)^{2^k} \mathrm{e} \ 
\max_{i_{\sigma}} \Big\{ \frac{N^{N+1/2}}{\prod_{\sigma}i_{\sigma}^{i_{\sigma}}}\exp \Big\{ \frac{\beta^2 k}{4}N - \frac{\beta^2 k^2}{4}\\
& \qquad\qquad\qquad\qquad\qquad\qquad\qquad\qquad\qquad\qquad\qquad + \frac{\beta^2}{2N} \sum_{i < j} \Big(\sum_{\sigma} i_{\sigma} \sigma_i \sigma_j\Big)^2 \Big\} \Big\}\\
&\leq \big( N + 2^k \big)^{2^k} \mathrm{e} \ \max_{p_{\sigma}} \Big\{
\frac{N^{N+1/2}}{\prod_{\sigma} (p_{\sigma}N)^{p_{\sigma}N}} \exp \Big\{ \frac{\beta^2 k}{4}N - \frac{\beta^2 k^2}{4}\\ 
& \qquad\qquad\qquad\qquad\qquad\qquad\qquad\qquad\qquad\qquad\qquad + \frac{\beta^2 N}{2} \sum_{i < j} \Big(\sum_{\sigma} p_{\sigma} \sigma_i \sigma_j\Big)^2 \Big\} \Big\} \text{,}
\end{align*}
where $\max_{p_{\sigma}}$ is taken over all vectors $(p_{\sigma})_{\sigma \in \{-1, 1\}^k}$ with non-negative entries 
such that\newline $\sum_{\sigma \in \{-1, 1\}^k} p_{\sigma} = 1$ and the last inequality follows from substituting $i_{\sigma} = p_{\sigma}N$. Moving everything inside the exponential gives
\begin{align*}
\mathbb{E} \Big[ Z_N(\beta) \Big]^k &\leq \big( N + 2^k \big)^{2^k} \mathrm{e} \ \max_{p_{\sigma}} 
\exp\Big\{ N \log N + \frac{1}{2} \log N - \sum_{\sigma} \big( p_\sigma N \log p_{\sigma} \\ 
& \qquad\qquad\qquad + p_{\sigma} N \log N \big) + \frac{\beta^2 k}{4}N - \frac{\beta^2 k^2}{4} + \frac{\beta^2 N}{2} \sum_{i < j} \Big(\sum_{\sigma} p_{\sigma} \sigma_i \sigma_j\Big)^2\Big\}\\
&= \big( N + 2^k \big)^{2^k} \mathrm{e} \ \max_{p_{\sigma}} \exp\Big\{ N\Big( -\sum_{\sigma} p_{\sigma} \log p_{\sigma} + \frac{\beta^2 k}{4}
+ \frac{\beta^2}{2} \sum_{i < j} \Big(\sum_{\sigma} p_{\sigma} \sigma_i \sigma_j\Big)^2 \Big)\\
& \qquad\qquad\qquad + \frac{1}{2} \log N  - \frac{\beta^2 k^2}{4} \Big\} \text{.}
\end{align*}
Taking the logarithm of the above inequality and dividing it by $N$ gives 
\begin{align*}
\frac{1}{N} \log \mathbb{E} \Big[ Z_N(\beta) \Big]^k &\leq \max_{p_{\sigma}} \Big\{ - \sum_{\sigma} p_{\sigma} \log p_{\sigma} + \frac{\beta^2 k}{4} 
+ \frac{\beta^2}{2} \sum_{i < j} \Big( \sum_{\sigma} p_{\sigma} \sigma_i \sigma_j \Big)^2 \Big\}\\
& \qquad + \frac{1}{N} \log \Big( \big( N + 2^k \big)^{2^k} \mathrm{e} \Big) 
+ \frac{1}{N} \Big( \frac{1}{2} \log N - \frac{\beta^2 k^2}{4} \Big) \text{.}
\end{align*}
Taking $\limsup_{N \to \infty}$ gives
\[
\limsup_{N \to \infty} \frac{1}{N} \log \mathbb{E} \Big[ Z_N(\beta) \Big]^k \leq \max_{p_\sigma} \Big\{ - \sum_{\sigma} p_{\sigma} \log p_{\sigma} + \frac{\beta^2 k}{4} 
+ \frac{\beta^2}{2} \sum_{i < j} \Big( \sum_{\sigma} p_{\sigma} \sigma_i \sigma_j \Big)^2 \Big\} \text{.}
\] 
We shall now prove the lower bound of \eqref{eq2}. Let $p_{\sigma}^{\ast}$, $\sigma \in \{-1, 1\}^k$ be a vector of maximising values of 
\[
- \sum_{\sigma} p_{\sigma} \log p_{\sigma} + \frac{\beta^2 k}{4} + \frac{\beta^2}{2} \sum_{i < j} \Big( \sum_{\sigma} p_{\sigma} \sigma_i \sigma_j \Big)^2 \text{.}
\]
Then there exists a sequence of vectors $i_{\sigma}(N)$, $N \geq 2$ such that $\sum_{\sigma} i_{\sigma}(N) = N$ and 
\[
\frac{i_{\sigma}(N)}{N} \to p_{\sigma}^{\ast} \qquad \text{ as } N \to \infty \quad \forall \sigma \in \{ -1, 1\}^k
\]
\Big( E.g. one can take $i_{\sigma}(N) := \lfloor N p_{\sigma}^{\ast} \rfloor$ for all $\sigma \neq (1, \cdots, 1)$ and 
$i_{\sigma}(N) := N - \sum_{\sigma \neq (1, \cdots, 1)} i_{\sigma}(N)$ for $\sigma = (1, \cdots, 1)$\Big)\newline
Therefore for any $\epsilon > 0$ there exists $N_\epsilon$ such that for all $N \geq N_\epsilon$
\begin{align*}
&\max_{p_{\sigma}} \Big\{ - \sum_{\sigma} p_{\sigma} \log p_{\sigma} + \frac{\beta^2 k}{4} + \frac{\beta^2}{2} \sum_{i < j} \Big( \sum_{\sigma} p_{\sigma} \sigma_i \sigma_j \Big)^2 \Big\} \\
= &- \sum_{\sigma} p_{\sigma}^{\ast} \log p_{\sigma}^{\ast} + \frac{\beta^2 k}{4} + \frac{\beta^2}{2} \sum_{i < j} \Big( \sum_{\sigma} p_{\sigma}^{\ast} \sigma_i \sigma_j \Big)^2\\
\leq &- \sum_{\sigma} \frac{i_{\sigma}(N)}{N} \log \frac{i_{\sigma}(N)}{N} + \frac{\beta^2 k}{4} + \frac{\beta^2}{2} \sum_{i < j} \Big( \sum_{\sigma} 
\frac{i_{\sigma}(N)}{N} \sigma_i \sigma_j \Big)^2 + \epsilon \text{.}
\end{align*}
Then from identity \eqref{eq1} and inequality \eqref{stirling} we have
\begin{align*}
\mathbb{E} \Big[ Z_N(\beta) \Big]^k &\geq \frac{N!}{\prod_{\sigma}i_{\sigma}(N)!} \exp \Big\{ \frac{\beta^2 k}{4}N - \frac{\beta^2 k^2}{4} 
+ \frac{\beta^2}{2N} \sum_{i < j} \Big(\sum_{\sigma} i_{\sigma}(N) \sigma_i \sigma_j\Big)^2 \Big\}\\
&\geq  \frac{N^{N+1/2} \mathrm{e}^{-N}}{\mathrm{e}^{2^k}\prod_{\sigma : i_{\sigma}(N)>0}i_{\sigma}(N)^{i_{\sigma}(N) + 1/2} \mathrm{e}^{-i_{\sigma}(N)} }
\exp \Big\{ \frac{\beta^2 k}{4}N - \frac{\beta^2 k^2}{4} + \frac{\beta^2}{2N} \sum_{i < j} \Big(\sum_{\sigma} i_{\sigma}(N) \sigma_i \sigma_j\Big)^2 \Big\}\\
&\geq  \frac{N^{N+1/2}}{\mathrm{e}^{2^k} N^{2^{k-1}} \prod_{\sigma : i_{\sigma}(N)>0}i_{\sigma}(N)^{i_{\sigma}(N)}}
\exp \Big\{ \frac{\beta^2 k}{4}N - \frac{\beta^2 k^2}{4} + \frac{\beta^2}{2N} \sum_{i < j} \Big(\sum_{\sigma} i_{\sigma}(N) \sigma_i \sigma_j\Big)^2 \Big\} \text{.}
\end{align*}
Moving everything into the exponential gives
\begin{align*}
\mathbb{E} \Big[ Z_N(\beta) \Big]^k &\geq \frac{1}{\mathrm{e}^{2^k} N^{2^{k-1}}} \exp \Big\{ N \log N + \frac{1}{2} \log N - 
\sum_{\sigma} i_{\sigma}(N) \log i_{\sigma}(N)\\
& \qquad\qquad\qquad + \frac{\beta^2 k}{4}N - \frac{\beta^2 k^2}{4} + \frac{\beta^2}{2N} \sum_{i < j} \Big(\sum_{\sigma} i_{\sigma}(N) \sigma_i \sigma_j\Big)^2 \Big\}\\
&= \frac{1}{\mathrm{e}^{2^k} N^{2^{k-1}}} \exp \Big\{ \frac{1}{2} \log N - 
N \sum_{\sigma} \frac{i_{\sigma}(N)}{N} \log \frac{i_{\sigma}(N)}{N} + \frac{\beta^2 k}{4}N - \frac{\beta^2 k^2}{4}\\
& \qquad\qquad\qquad + \frac{\beta^2 N}{2} \sum_{i < j} \Big(\sum_{\sigma} \frac{i_{\sigma}(N)}{N} \sigma_i \sigma_j\Big)^2 \Big\}
\end{align*}
Now fix $\epsilon > 0$. Taking the logarithm of the above inequality and dividing it by $N$ gives
\begin{align*}
\frac{1}{N} \log \mathbb{E} \Big[ Z_N(\beta) \Big]^k &\geq \frac{1}{N} \log \Big( \frac{1}{ \mathrm{e}^{2^k} N^{2^{k-1}}} \Big) + 
\frac{1}{2} \frac{1}{N} \log N - \frac{1}{N} \frac{\beta^2 k^2}{4}\\
& \qquad\qquad - \sum_{\sigma} \frac{i_{\sigma}(N)}{N} \log \frac{i_{\sigma}(N)}{N} + \frac{\beta^2 k}{4} + 
\frac{\beta^2}{2} \sum_{i < j} \Big(\sum_{\sigma} \frac{i_{\sigma}(N)}{N} \sigma_i \sigma_j\Big)^2 \\
&\geq \frac{1}{N} \log \Big( \frac{1}{ \mathrm{e}^{2^k} N^{2^{k-1}}} \Big) + \frac{1}{2} \frac{1}{N} \log N - \frac{1}{N} \frac{\beta^2 k^2}{4}\\
& \qquad\qquad - \sum_{\sigma} p_{\sigma}^{\ast} \log p_{\sigma}^{\ast} + \frac{\beta^2 k}{4} + \frac{\beta^2}{2} \sum_{i < j} \Big( \sum_{\sigma} p_{\sigma}^{\ast} \sigma_i \sigma_j \Big)^2 
- \epsilon \quad \forall N \geq N_{\epsilon} \text{.}
\end{align*}
Taking $\liminf_{N \to \infty}$ of the above inequality gives
\[
\liminf_{N \to \infty} \frac{1}{N} \log \mathbb{E} \Big[ Z_N(\beta) \Big]^k \geq - \sum_{\sigma} p_{\sigma}^{\ast} \log p_{\sigma}^{\ast} 
+ \frac{\beta^2 k}{4} + \frac{\beta^2}{2} \sum_{i < j} \Big( \sum_{\sigma} p_{\sigma}^{\ast} \sigma_i \sigma_j \Big)^2 - \epsilon \text{,}
\]
which holds true for all $\epsilon > 0$. Letting $\epsilon \searrow 0$ gives 
\[
\liminf_{N \to \infty} \frac{1}{N} \log \mathbb{E} \Big[ Z_N(\beta) \Big]^k \geq - \sum_{\sigma} p_{\sigma}^{\ast} \log p_{\sigma}^{\ast} 
+ \frac{\beta^2 k}{4} + \frac{\beta^2}{2} \sum_{i < j} \Big( \sum_{\sigma} p_{\sigma}^{\ast} \sigma_i \sigma_j \Big)^2 \text{,}
\]
which finishes the proof of \eqref{eq2}.

Finally, Corollary \ref{cor} follows from \eqref{eq2} by the simple observation that 
\begin{align*}
\sum_{i<j}\Big( \sum_{\sigma} p_{\sigma}^{\ast} \sigma_i \sigma_j \Big)^2 = &\sum_{i<j} \sum_{\sigma, \sigma'} p_{\sigma}^{\ast} 
p_{\sigma'}^{\ast} \sigma_i \sigma_j \sigma_i' \sigma_j'\\
= &\sum_{\sigma, \sigma'} p_{\sigma}^{\ast} p_{\sigma'}^{\ast} \sum_{i<j} \sigma_i \sigma_j \sigma_i' \sigma_j'\\
= &\sum_{\sigma, \sigma'} p_{\sigma}^{\ast} p_{\sigma'}^{\ast} \Big(\frac{(\sigma \cdot \sigma')^2 - k}{2} \Big)\\
= &\frac{1}{2} \sum_{\sigma, \sigma'} p_{\sigma}^{\ast} p_{\sigma'}^{\ast} (\sigma \cdot \sigma')^2 - \frac{k}{2} \text{.}
\end{align*}
\end{proof}
\section{Comparison of \eqref{log_moments} with the result of M. Talagrand}
In \cite{4} Michel Talagrand considered the $p$-spin model with the partition function 
\[
\hat{Z}_N(\beta) = \sum_{\sigma \in \{-1, 1\}^N} \exp \Big\{ \sum_{p \geq 1} \frac{\beta_p}{N^{(p-1)/2}} 
\sum_{i_1, \cdots, i_p}^N g_{i_1 \cdots i_p} \sigma_{i_1} \cdots \sigma_{i_p} \Big\} \text{,}
\]
where $g_{i_1 \cdots i_p}$'s are i.i.d. standard normal random variables and $\beta_p$'s are some real numbers 
satisfying $\sum_{p \geq 2} \beta_p p^2 < \infty$. 

We are only interested in the case when $\beta_p = 0$ for $p \neq 2$ and $\beta_2 = \beta$. In such a setup
\begin{align}
\label{partition2}
\hat{Z}_N(\beta) &= \sum_{\sigma \in \{-1, 1\}^N} \exp \Big\{ \frac{\beta}{\sqrt{N}} \sum_{i,j = 1}^N g_{i j} \sigma_i \sigma_j \Big\}\nonumber\\
&= \sum_{\sigma \in \{-1, 1\}^N} \exp \Big\{ \frac{\beta}{\sqrt{N}} \sum_{i = 1}^N g_{i i} + 
\frac{\beta}{\sqrt{N}} \sum_{1 \leq i < j \leq N}^N (g_{i j} + g_{j i}) \sigma_i \sigma_j \Big\}\nonumber\\
&\stackrel{d}{=} \exp \{ \beta g_{1 1} \} \sum_{\sigma \in \{-1, 1\}^N} \exp \Big\{\frac{\beta \sqrt{2}}{\sqrt{N}} \sum_{1 \leq i < j \leq N}^N g_{i j} \sigma_i \sigma_j \Big\}\text{.}
\end{align}
In Theorem 9.4 in \cite{4} Talagrand gives the following result: 
\begin{align}
\label{log_moments_talagrand}
\lim_{N \to \infty} \frac{1}{kN} \log \mathbb{E} \Big[ \hat{Z}_N(\beta)\Big]^k = \sup_{q \in [0,1]} \Big\{ \log 2 + 
\frac{1}{2}(\beta^2 - 2 \beta^2 q) &+ \frac{1}{k} \log \mathbb{E} \cosh^k(\beta \sqrt{2q} Z)\nonumber\\ 
& \qquad\qquad - \frac{1}{2}(k - 1) \beta^2 q^2\Big\} \text{,}
\end{align}
where $Z \stackrel{d}{=} N(0,1)$. In order to compare this result with our Corollary \ref{cor} we should first replace $\hat{Z}_N$ with $Z_N$. 
From \eqref{partition2} we see that 
\[
\lim_{N \to \infty} \frac{1}{N} \log \mathbb{E} \Big[ \hat{Z}_N(\beta)\Big]^k = \lim_{N \to \infty} \frac{1}{N} \log \mathbb{E} \Big[ Z_N(\sqrt{2}\beta)\Big]^k
\]
and hence \eqref{log_moments_talagrand} can be rewritten as
\begin{align}
\label{log_moments_talagrand2}
\lim_{N \to \infty} \frac{1}{N} \log \mathbb{E} \Big[ Z_N(\beta)\Big]^k &= 
\sup_{q \in [0,1]} \Big\{ k \log 2 + \frac{1}{4} k (\beta^2 - 2 \beta^2 q^2) + \log \mathbb{E} \cosh^k (\beta \sqrt{q} Z)\nonumber\\ 
&\qquad\qquad\qquad\qquad\qquad\qquad\qquad\qquad\quad - \frac{1}{4}k(k-1) \beta^2 q^2 \Big\}\nonumber\\
&= \sup_{q \in [0, \frac{\beta^2}{2}]} \Big\{ -k(k-1)\frac{q^2}{\beta^2} + \log H(q) - qk + \frac{1}{4}k \beta^2\Big\} \text{,}
\end{align}
where $H(q)$ is the quantity which will feature a lot in this section and which satisfies the following identities:
\begin{align}
\label{h}
H(q) &:= \sum_{\sigma \in \{-1, 1\}^k} \mathrm{e}^{q (\sigma \cdot \sigma_0)^2} \text{ (for any choice of } \sigma_0 \text{)}\nonumber\\
&= \sum_{i = 0}^k \binom{k}{i} \mathrm{e}^{(2i - k)^2q}\nonumber\\
&= \mathbb{E} \Big( \mathrm{e}^{\sqrt{2q}Z} + \mathrm{e}^{-\sqrt{2q}Z} \Big)^k \text{ , where } Z \stackrel{d}{=} N(0,1)\nonumber\\
&= 2^k \mathbb{E}\cosh^k(\sqrt{2q} Z)\text{.}
\end{align}
The function $H : [0, \infty) \to \mathbb{R}$ is important and before we proceed any further let us give a list of some of its basic properties. 
\begin{Proposition}[Properties of $H(q)$]$ $
\label{properties}
\begin{enumerate}
\item $H(0) = 2^k$, $H'(0) = k 2^k$
\item $\frac{\partial}{\partial q} \big( \frac{H'(q)}{H(q)} \big) = \frac{\partial^2}{\partial q^2} \big( \log H(q) \big) \geq 0 \ \ \forall q \in [0, \infty)$ \quad (in other words, $H$ is log-convex)
\item $k = \frac{H'(0)}{H(0)} \leq \frac{H'(q)}{H(q)} \leq \lim_{q \to \infty} \frac{H'(q)}{H(q)} = k^2$ \quad $\forall q \in [0, \infty)$ 
\end{enumerate}
\end{Proposition}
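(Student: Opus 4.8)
The plan is to work throughout with the representation $H(q) = \sum_{i=0}^k \binom{k}{i} \mathrm{e}^{(2i-k)^2 q}$ from \eqref{h}, which exhibits $H$ as a positive linear combination of exponentials $\mathrm{e}^{\lambda_i q}$ with nonnegative weights $a_i := \binom{k}{i}$ and exponents $\lambda_i := (2i-k)^2$. Each of the three claims then reduces either to an elementary boundary computation or to a standard fact about such sums. For part (i), I would simply evaluate at $q=0$, obtaining $H(0) = \sum_{i=0}^k \binom{k}{i} = 2^k$, and differentiate term by term to get $H'(0) = \sum_{i=0}^k \binom{k}{i}(2i-k)^2$. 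Recognising $\sum_i \binom{k}{i}(2i-k)^2 = 2^k\, \mathbb{E}[(2I-k)^2]$ for $I$ a $\mathrm{Binomial}(k,\tfrac12)$ variable, and noting that $2I-k$ has mean $0$ and variance $k$, yields $H'(0) = k\,2^k$.

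For part (ii), the key observation is that for a positive combination of exponentials the second derivative of $\log H$ is a variance, hence nonnegative. Concretely, writing $w_i(q) := \binom{k}{i}\mathrm{e}^{\lambda_i q}/H(q)$, which for each fixed $q$ is a probability weight on $\{0,\dots,k\}$, one computes $H'/H = \sum_i w_i \lambda_i$ and $H''/H = \sum_i w_i \lambda_i^2$, so that $\tfrac{\partial^2}{\partial q^2}\log H = H''/H - (H'/H)^2 = \sum_i w_i \lambda_i^2 - \big(\sum_i w_i \lambda_i\big)^2 \geq 0$, this being exactly the variance of $\lambda$ under the weights $w_i$ (equivalently, an application of Cauchy--Schwarz). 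This establishes log-convexity.

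For part (iii), I note first that log-convexity from part (ii) means $q \mapsto H'(q)/H(q)$ is non-decreasing, so its values are squeezed between its value at $q=0$ and its limit as $q\to\infty$. The left endpoint is $H'(0)/H(0) = k\,2^k/2^k = k$ by part (i). For the right endpoint I would isolate the dominant exponential as $q\to\infty$: the exponent $\lambda_i = (2i-k)^2$ is maximised over $i \in \{0,\dots,k\}$ precisely at $i=0$ and $i=k$, where it equals $k^2$, with weights $\binom{k}{0}=\binom{k}{k}=1$. Hence as $q\to\infty$ the numerator $\sum_i \binom{k}{i}(2i-k)^2\mathrm{e}^{(2i-k)^2 q} \sim 2k^2\,\mathrm{e}^{k^2 q}$ while the denominator $H(q) \sim 2\,\mathrm{e}^{k^2 q}$, so $H'(q)/H(q) \to k^2$. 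Monotonicity then gives $k \leq H'(q)/H(q) \leq k^2$ for all $q \geq 0$.

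The only step requiring any genuine care is the limit computation in part (iii): one must verify that the maximal exponent $k^2$ is attained only at $i=0,k$ and factor out $\mathrm{e}^{k^2 q}$ before passing to the limit, so that the subdominant terms (those with $(2i-k)^2 < k^2$) vanish. Everything else is a direct substitution into the binomial representation of $H$, and I expect no real obstacle beyond bookkeeping.
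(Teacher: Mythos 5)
Your proof is correct, and it differs from the paper's mainly in which representation of $H$ it runs on. The paper works with the spin form $H(q)=\sum_{\sigma\in\{-1,1\}^k}\mathrm{e}^{q(\sigma\cdot\sigma_0)^2}$ for parts (i) and (ii): it obtains $H'(0)=k2^k$ by expanding $(\sigma\cdot\sigma_0)^2=k+2\sum_{i<j}\sigma_0^i\sigma_0^j\sigma^i\sigma^j$ and noting the cross terms vanish under $\sum_\sigma$, and it proves (ii) by writing $H''(q)H(q)-H'(q)^2$ as a double sum over pairs $(\sigma,\sigma')$ and symmetrizing it into $\tfrac12\sum_{\sigma,\sigma'}\big[(\sigma\cdot\sigma_0)^2-(\sigma'\cdot\sigma_0)^2\big]^2\mathrm{e}^{q(\sigma\cdot\sigma_0)^2+q(\sigma'\cdot\sigma_0)^2}\geq 0$; only for (iii) does it switch to the binomial form, computing $\lim_{q\to\infty}\mathrm{e}^{-k^2q}H(q)=2$ and $\lim_{q\to\infty}\mathrm{e}^{-k^2q}H'(q)=2k^2$ exactly as you do. Your version does everything on the binomial form: (i) reduces to the variance of a $\mathrm{Binomial}(k,\tfrac12)$ variable (i.e.\ $\mathbb{E}(2I-k)^2=k$), and (ii) becomes the statement that $(\log H)''$ is the variance of the exponents $\lambda_i$ under the normalised weights $w_i(q)$ --- the same Cauchy--Schwarz fact the paper proves by symmetrization, just packaged probabilistically. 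The two arguments are equivalent in substance; yours is a little more compact and leans on the standard fact that positive sums of exponentials are log-convex, while the paper's spin-level computation reuses the expansion of $(\sigma\cdot\sigma_0)^2$ that recurs elsewhere in the paper (e.g.\ in the proofs of Proposition \ref{prop} and Proposition \ref{sum}). Your attention to the one genuinely delicate point in (iii) --- that the maximal exponent $(2i-k)^2=k^2$ is attained only at $i=0$ and $i=k$, so that factoring out $\mathrm{e}^{k^2q}$ kills all subdominant terms --- matches the paper's treatment, and the appeal to monotonicity from (ii) to squeeze $H'/H$ between $k$ and $k^2$ is identical.
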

\begin{proof}
$(i)$ Using the first equation of \eqref{h} 
\[
H(0) = \sum_{\sigma \in \{-1, 1\}^k} 1 = 2^k \text{,}
\]
and for any choice of $\sigma_0 \in \{-1, 1\}^k$
\begin{align*}
H'(0) &= \sum_{\sigma \in \{-1, 1\}^k} (\sigma \cdot \sigma_0)^2 = \sum_{\sigma \in \{-1, 1\}^k} \sum_{i,j=1}^k 
\sigma^i \sigma_0^i \sigma^j \sigma_0^j\\
&= \sum_{\sigma \in \{-1, 1\}^k} (k + 2 \sum_{i < j} \sigma_0^i \sigma_0^j \sigma^i \sigma^j)\\
&= k2^k + 2 \sum_{i < j} \sigma_0^i \sigma_0^j \sum_{\sigma \in \{-1, 1\}^k} \sigma^i \sigma^j\\
&= k2^k \text{.}
\end{align*}
$(ii)$ 
\[
\frac{\partial}{\partial q} \Big( \frac{H'(q)}{H(q)} \Big) = \frac{H''(q)H(q) - H'(q)^2}{H(q)^2} \text{.}
\]
It is then sufficient to show that $H''(q)H(q) - H'(q)^2 \geq 0$ for all $q \geq 0$. Note that from the first equation of \eqref{h} for an arbitrary $ \sigma_0 \in \{-1, 1\}^k$
\begin{align*}
H''(q)H(q) - H'(q)^2 &= \Big[ \sum_{\sigma}(\sigma \cdot \sigma_0)^4 \mathrm{e}^{q(\sigma \cdot \sigma_0)^2}\Big] 
\Big[ \sum_{\sigma'} \mathrm{e}^{q(\sigma' \cdot \sigma_0)^2}\Big]\\
&\qquad\qquad\qquad\qquad - \Big[ \sum_{\sigma}(\sigma \cdot \sigma_0)^2 \mathrm{e}^{q(\sigma \cdot \sigma_0)^2}\Big] 
\Big[ \sum_{\sigma'}(\sigma' \cdot \sigma_0)^2 \mathrm{e}^{q(\sigma' \cdot \sigma_0)^2}\Big]\\
&= \sum_{\sigma, \sigma'} \Big[ (\sigma \cdot \sigma_0)^4 - (\sigma \cdot \sigma_0)^2(\sigma' \cdot \sigma_0)^2\Big] 
\mathrm{e}^{q(\sigma \cdot \sigma_0)^2 + q(\sigma' \cdot \sigma_0)^2}\\
&= \frac{1}{2}\sum_{\sigma, \sigma'} \Big[ (\sigma \cdot \sigma_0)^4 - 2(\sigma \cdot \sigma_0)^2(\sigma' \cdot \sigma_0)^2 
+ (\sigma' \cdot \sigma_0)^4\Big] \mathrm{e}^{q(\sigma \cdot \sigma_0)^2 + q(\sigma' \cdot \sigma_0)^2}\\
&= \frac{1}{2}\sum_{\sigma, \sigma'} \Big[ (\sigma \cdot \sigma_0)^2 - (\sigma' \cdot \sigma_0)^2\Big]^2 
\mathrm{e}^{q(\sigma \cdot \sigma_0)^2 + q(\sigma' \cdot \sigma_0)^2} \geq 0 \text{.}
\end{align*}
$(iii)$ Note that from the second equation in \eqref{h} 
\[
\lim_{q \to \infty} \mathrm{e}^{- k^2 q} H(q) = 
\lim_{q \to \infty} \mathrm{e}^{- k^2 q} \sum_{i = 0}^k \binom{k}{i} \mathrm{e}^{(2i - k)^2 q} = 2
\]
and 
\[
\lim_{q \to \infty} \mathrm{e}^{- k^2 q} H'(q) = 
\lim_{q \to \infty} \mathrm{e}^{- k^2 q} \sum_{i = 0}^k (2i - k)^2 \binom{k}{i} \mathrm{e}^{(2i - k)^2 q} = 2 k^2 \text{.}
\]
Thus $\lim_{q \to \infty} \frac{H'(q)}{H(q)} = \lim_{q \to \infty} \frac{\mathrm{e}^{-k^2  q}H'(q)}{\mathrm{e}^{-k^2  q}H(q)} = k^2$ 
and since $\frac{H'(q)}{H(q)}$ is increasing in $q$ and $\frac{H'(0)}{H(0)} = k$ it follows that 
\[
k = \frac{H'(0)}{H(0)} \leq \frac{H'(q)}{H(q)} \leq \lim_{q \to \infty} \frac{H'(q)}{H(q)} = k^2 \qquad \forall q \geq 0 \text{.}
\]
\end{proof}
Now let us continue with Talagrand's result. Define 
\[
f(q) := -k(k-1) \frac{q^2}{\beta^2} + \log H(q) - qk + \frac{1}{4}k \beta^2 
\]
Equation \eqref{log_moments_talagrand2} said that 
\[
\lim_{N \to \infty} \frac{1}{N} \log \mathbb{E} \Big[ Z_N(\beta)\Big]^k = \sup_{q \in [0, \frac{\beta^2}{2}]} f(q) \text{.}
\] 
From Proposition \ref{properties} we get
\[
f'(0) = \frac{H'(0)}{H(0)} - k = 0
\]
and
\[
f'(\frac{\beta^2}{2}) = -k(k - 1) + \frac{H'(\frac{\beta^2}{2})}{H(\frac{\beta^2}{2})} - k \leq -k(k - 1) + k^2 - k = 0
\]
(in fact $f'(q) < 0$ for all $q \geq \frac{\beta^2}{2}$).

So $f(\cdot)$ is flat at $0$ and non-increasing at $\frac{\beta^2}{2}$ and 
hence $\sup_{q \in [0, \frac{\beta^2}{2}]} f(q)$ is attained at a local maximum of $f(\cdot)$. Let $S$ be the set of all the 
local extrema of $f(\cdot)$: 
\begin{align}
\label{set}
S &:= \Big\{ q \in [0, \frac{\beta^2}{2}] \ : \ f'(q) = 0 \Big\}\nonumber\\
&= \Big\{ q \in [0, \frac{\beta^2}{2}] \ : \ q = \frac{\beta^2}{2k(k-1)} \Big( \frac{H'(q)}{H(q)} - k \Big) \Big\} \text{.}
\end{align}
It is always the case that $0 \in S$ and it seems that in general $S$ has between $1$ and $3$ elements depending 
on the values of $\beta$ and $k$. Then 
\begin{align}
\label{talagrand3}
\lim_{N \to \infty} \frac{1}{N} \log \mathbb{E} \Big[ Z_N(\beta)\Big]^k &= \sup_{q \in [0, \frac{\beta^2}{2}]} f(q)\nonumber\\
&= \sup_{q \in S} f(q)\nonumber\\
&= \sup_{q \in S} \Big\{ -k(k-1) \frac{q^2}{\beta^2} + \log H(q) - qk + \frac{1}{4}k \beta^2 \Big\} \text{.}
\end{align}
Thus we have rewritten the result of M. Talagrand \eqref{log_moments_talagrand} in the form \eqref{talagrand3} which will be more suitable for us.

Let us now look at our result. In Corollary \ref{cor} we have shown that
\begin{equation}
\label{max_p}
\lim_{N \to \infty} \frac{1}{N} \log \mathbb{E} \Big[ Z_N(\beta)\Big]^k = \max_{(p_{\sigma)} \in \Lambda_k} \Big\{ F \big( (p_{\sigma}) \big) \Big\} \text{,}
\end{equation}
where $ \Lambda_k = \Big\{ (p_{\sigma})_{\sigma \in \{-1, 1\}^k} \ : \ p_{\sigma} \geq 0 \ \forall \ \sigma \in \{-1, 1\}^k  \ , \sum_{\sigma \in \{-1, 1\}^k} p_{\sigma} = 1 \Big\}$ and
\[
F \big( (p_{\sigma}) \big) = - \sum_{\sigma \in \{-1, 1\}^k} p_{\sigma} \log p_{\sigma} + \frac{\beta^2}{4} 
\sum_{\sigma , \sigma' \in \{-1, 1\}^k} p_{\sigma} p_{\sigma '} (\sigma \cdot \sigma')^2 \text{.}
\]
Proposition \ref{boundary} suggested that the maxima of $F(\cdot)$ are solutions of the corresponding Lagrangian equations. 
The Lagrangian for the maximisation problem \eqref{log_moments} is given by 
\[
\Psi \big( (p_{\sigma}), \lambda \big) := - \sum_{\sigma} p_{\sigma} \log p_{\sigma} + 
\frac{\beta^2}{4} \sum_{\sigma, \sigma'} p_{\sigma} p_{\sigma'} (\sigma \cdot \sigma')^2 + 
\lambda \big( \sum_{\sigma} p_{\sigma} - 1 \big) \text{,}
\]
where $\lambda$ is the Lagrange multiplier. The partial derivatives of $\Lambda$ are: 
\begin{align*}
\frac{\partial \Psi}{\partial p_{\sigma_0}} &= - \log p_{\sigma_0} - 1 + \frac{\beta^2}{2} \sum_{\sigma} p_{\sigma} 
(\sigma \cdot \sigma_0)^2 + \lambda \quad \forall \sigma_0 \text{,}\\
\frac{\partial \Psi}{\partial \lambda} &= \sum_{\sigma} p_{\sigma} - 1 \text{.}
\end{align*}
Equating them to $0$ gives:
\begin{align}
\label{lagrange0}
\log p_{\sigma_0} &= \frac{\beta^2}{2} \sum_{\sigma} p_{\sigma} (\sigma \cdot \sigma_0)^2 + \lambda - 1 \quad \forall \sigma_0 \text{,}\\
\sum_{\sigma} p_{\sigma} &= 1 \text{.}\nonumber
\end{align}
Or, equivalently, 
\begin{equation}
\label{lagrange}
p_{\sigma_0} = C \mathrm{e}^{\frac{\beta^2}{2} \sum_{\sigma} p_{\sigma} (\sigma \cdot \sigma_0)^2} \quad \forall \sigma_0 \text{,}
\end{equation}
where $C$ is the normalising constant. 

We are not sure how one would rigorously solve \eqref{lagrange0} - \eqref{lagrange} (and whether it is even reasonable to look for 
all the solutions of \eqref{lagrange0} - \eqref{lagrange}). Nevertheless we luckily managed to find values of $(p_{\sigma})$ that solve \eqref{lagrange} and 
that make $F((p_{\sigma}))$ match the expression \eqref{talagrand3} given by Talagrand and which therefore must be the maximisers of $F(\cdot)$. 
However we cannot tell whether we have found all such values.

Before we present these values of $(p_{\sigma})$ let us prove the following useful result.
\begin{Proposition} $ $\newline
\label{sum}
(i) 
\[
\sum_{\sigma \in \{-1, 1\}^k} (\sigma \cdot \sigma_0)^2 = k2^k \qquad \forall \sigma_0 \in \{-1, 1\}^k \text{,}
\]
(ii)
\begin{align}
\label{eq_sum}
\sum_{\sigma \in \{-1, 1\}^k} (\sigma \cdot \sigma_0)^2 \mathrm{e}^{q (\sigma \cdot \sigma_1)^2} = 
&\frac{H'(q) - k H(q)}{k^2 - k} (\sigma_0 \cdot \sigma_1)^2\nonumber\\ 
\qquad &+ \frac{k^2 H(q) - H'(q)}{k - 1} \qquad \forall \sigma_0, \sigma_1 \in \{-1, 1\}^k , \ \forall q \in \mathbb{R} \text{,}
\end{align}
where $H(\cdot)$ is the function defined in \eqref{h}.
\end{Proposition}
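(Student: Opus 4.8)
The plan for part (i) is to recognise it as the value at $q=0$ of the derivative $H'$. Differentiating the first line of \eqref{h} gives $H'(q) = \sum_{\sigma}(\sigma\cdot\sigma_0)^2\mathrm{e}^{q(\sigma\cdot\sigma_0)^2}$, so that $\sum_\sigma(\sigma\cdot\sigma_0)^2 = H'(0) = k2^k$, the last equality being exactly the computation already carried out for Proposition \ref{properties}(i). Equivalently one expands $(\sigma\cdot\sigma_0)^2 = \sum_{a,b}\sigma^a\sigma^b\sigma_0^a\sigma_0^b$ and uses $\sum_\sigma\sigma^a\sigma^b = 2^k\Ind_{\{a=b\}}$ to reach the same value directly.

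For part (ii) write $S := \sum_\sigma(\sigma\cdot\sigma_0)^2\mathrm{e}^{q(\sigma\cdot\sigma_1)^2}$. The structural fact I would establish first is that, for fixed $q$ and $\sigma_1$, $S$ is an affine function of $(\sigma_0\cdot\sigma_1)^2$, say $S = A(q)(\sigma_0\cdot\sigma_1)^2 + B(q)$ with $A,B$ independent of $\sigma_0,\sigma_1$. To do this I would change variables by setting $\tau^i := \sigma^i\sigma_1^i$; as $\sigma$ ranges over $\{-1,1\}^k$ so does $\tau$, while $\sigma\cdot\sigma_1 = \sum_i\tau^i$ and $\sigma^a\sigma^b = \tau^a\tau^b\sigma_1^a\sigma_1^b$. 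Expanding $(\sigma\cdot\sigma_0)^2 = \sum_{a,b}\sigma^a\sigma^b\sigma_0^a\sigma_0^b$ then turns $S$ into $\sum_{a,b}\sigma_0^a\sigma_0^b\sigma_1^a\sigma_1^b\,U_{ab}$, where $U_{ab} := \sum_\tau\tau^a\tau^b\mathrm{e}^{q(\sum_i\tau^i)^2}$. Because the weight $\mathrm{e}^{q(\sum_i\tau^i)^2}$ is invariant under permuting the coordinates of $\tau$, the array $U_{ab}$ takes only two values: $u_1 := U_{aa}$ on the diagonal and $u_2 := U_{ab}$ (for $a\neq b$) off it.

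Splitting the double sum accordingly and using $(\sigma_0^a)^2(\sigma_1^a)^2 = 1$ gives $S = ku_1 + u_2\big[(\sigma_0\cdot\sigma_1)^2 - k\big]$, since $\sum_{a\neq b}\sigma_0^a\sigma_0^b\sigma_1^a\sigma_1^b = (\sigma_0\cdot\sigma_1)^2 - k$. This already has the claimed affine form with $A = u_2$ and $B = k(u_1-u_2)$, so it only remains to identify $u_1,u_2$ through $H$ and $H'$. Directly $u_1 = \sum_\tau\mathrm{e}^{q(\sum_i\tau^i)^2} = H(q)$ (as $(\tau^a)^2 = 1$), and summing $U_{ab}$ over all ordered pairs gives $\sum_{a,b}U_{ab} = \sum_\tau(\sum_i\tau^i)^2\mathrm{e}^{q(\sum_i\tau^i)^2} = H'(q)$ on one side and $ku_1 + k(k-1)u_2$ on the other. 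Solving yields $u_2 = \frac{H'(q)-kH(q)}{k^2-k}$, hence $A = \frac{H'(q)-kH(q)}{k^2-k}$ and $B = kH(q)-ku_2 = \frac{k^2H(q)-H'(q)}{k-1}$, which is precisely \eqref{eq_sum}.

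The one genuinely delicate point is the symmetry reduction in the second paragraph: everything hinges on the fact that after the substitution $\tau^i = \sigma^i\sigma_1^i$ the weight depends on $\tau$ only through $\sum_i\tau^i$, which is what forces $U_{ab}$ to be permutation-invariant and therefore to collapse to the two values $u_1,u_2$. Once this is granted, the remainder is a routine two-parameter linear fit. As a cross-check one can bypass the substitution altogether and pin down $A,B$ from the two evaluations $\sum_{\sigma_0}S = k2^k H(q)$ (applying part (i) after swapping the order of summation) and $S\big|_{\sigma_0=\sigma_1} = H'(q)$, which reproduce the same coefficients.
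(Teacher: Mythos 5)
Your proof is correct, but for part (ii) it takes a genuinely different route from the paper. The paper first reduces \eqref{eq_sum} to a family of polynomial identities by an analyticity argument (both sides are entire functions of $q$, so it suffices to match all derivatives at $q=0$), and then, for each $n$, establishes $\sum_{\sigma}(\sigma\cdot\sigma_0)^2(\sigma\cdot\sigma_1)^{2n} = C_n(\sigma_0\cdot\sigma_1)^2 + D_n$ by a combinatorial classification of which index patterns survive the sum over $\sigma$; finally it pins down $C_n, D_n$ by the two evaluations $\sigma_0=\sigma_1$ and summation over $\sigma_0$. You instead work directly at the level of the exponential for all real $q$: the change of variables $\tau^i=\sigma^i\sigma_1^i$ together with the permutation invariance of the weight $\mathrm{e}^{q(\sum_i\tau^i)^2}$ forces the matrix $U_{ab}$ to take only a diagonal value $u_1=H(q)$ and an off-diagonal value $u_2$, which immediately yields the affine dependence on $(\sigma_0\cdot\sigma_1)^2$, and the identification $ku_1+k(k-1)u_2=H'(q)$ closes the computation. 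Your approach buys a cleaner and arguably more airtight argument: it bypasses both the power-series justification and the paper's somewhat informally stated parity analysis of surviving monomials, obtaining the structural affine form in one stroke. What the paper's route buys is the explicit moment identities \eqref{eq_sum2} for each $n$, which have some independent interest. Note also that your closing cross-check (determining $A,B$ from $S\big|_{\sigma_0=\sigma_1}=H'(q)$ and $\sum_{\sigma_0}S = k2^kH(q)$) is precisely the linear-fit step the paper uses for its coefficients $C_n, D_n$, so the two arguments coincide in that final stage; they differ only in how the affine structure is first established.
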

\begin{proof}
(i) From Proposition \ref{properties} (i) 
\[
\sum_{\sigma} (\sigma \cdot \sigma_0)^2 = H'(0) = k2^k \text{.}
\]
(ii) Firstly, note that the left and the right hand sides of \eqref{eq_sum} as functions of $q$ have infinite radii of convergence 
about the origin (being just linear combinations of exponentials). Thus, it is sufficient to prove that the derivatives of all orders 
at the origin of both side of \eqref{eq_sum} are the same. That is, we need to show that
\begin{align}
\label{eq_sum2}
\sum_{\sigma \in \{-1, 1\}^k} (\sigma \cdot \sigma_0)^2 (\sigma \cdot \sigma_1)^{2n} = 
&\frac{H^{(n+1)}(0) - k H^{(n)}(0)}{k^2 - k} (\sigma_0 \cdot \sigma_1)^2\nonumber\\ 
\qquad &+ \frac{k^2 H^{(n)}(0) - H^{(n+1)}(0)}{k - 1} \qquad \forall \sigma_0, \sigma_1 \in \{-1, 1\}^k , \ \forall n \geq 0 \text{.}
\end{align}
Now,
\begin{align*}
\sum_{\sigma} (\sigma \cdot \sigma_0)^2 (\sigma \cdot \sigma_1)^{2n} &= \sum_{\sigma} 
\sum_{i_1, i_2, j_1, \cdots, j_{2n} = 1}^k \sigma^{i_1} \sigma_0^{i_1} \sigma^{i_2} \sigma_0^{i_2} \sigma^{j_1} 
\sigma_1^{j_1} \cdots \sigma^{j_{2n}} \sigma_1^{j_{2n}}\\
&= \sum_{i_1, i_2, j_1, \cdots, j_{2n} = 1}^k \sigma_0^{i_1} \sigma_0^{i_2} \sigma_1^{j_1} \cdots \sigma_1^{j_{2n}} 
\sum_{\sigma} \sigma^{i_1} \sigma^{i_2} \sigma^{j_1} \cdots \sigma^{j_{2n}} \text{.}
\end{align*}
The sum $\sum_{\sigma} \sigma^{i_1} \sigma^{i_2} \sigma^{j_1} \cdots \sigma^{j_{2n}}$ is non-zero only when 
each element of the set $\{i_1, i_2, j_1, \cdots, j_{2n}\}$ is equal to exactly an odd number of other elements of this set. E.g.,
\[
i_1 = i_2 = j_1 = j_2 \ , \ j_3 = \cdots = j_{2n} \ , \ i_1 \neq j_3
\]
Or,
\[
i_1 = j_1 \ , \ i_2 = j_2 \ , \ j_3 = \cdots = j_{2n} \ , \ i_1 \neq i_2 \ , \ i_1 \neq j_3 \ , \ i_2 \neq j_3 \text{.}
\]
In this case $\sigma^{i_1} \sigma^{i_2} \sigma^{j_1} \cdots \sigma^{j_{2n}} = 1$, $\sum_{\sigma} \sigma^{i_1} \sigma^{i_2} \sigma^{j_1} \cdots \sigma^{j_{2n}} = 2^k$ 
and $\sigma_0^{i_1} \sigma_0^{i_2} \sigma_1^{j_1} \cdots \sigma_1^{j_{2n}} = \sigma_0^{i_1} \sigma_1^{i_1} \sigma_0^{i_2} \sigma_1^{i_2}$ or $1$ 
depending on whether $i_1 = i_2$ or not. Thus 
\begin{align}
\label{CD}
\sum_{\sigma} (\sigma \cdot \sigma_0)^2 (\sigma \cdot \sigma_1)^{2n} &= \sum_{i_1, i_2, j_1, \cdots, j_{2n} = 1}^k \sigma_0^{i_1} \sigma_0^{i_2} 
\sigma_1^{j_1} \cdots \sigma_1^{j_{2n}} \sum_{\sigma} \sigma^{i_1} \sigma^{i_2} \sigma^{j_1} \cdots \sigma^{j_{2n}}\nonumber\\
&= C_n (\sigma_0 \cdot \sigma_1)^2 + D_n
\end{align}
for some constants $C_n$ and $D_n$ that do not depend on $\sigma_0$ or $\sigma_1$. Letting $\sigma_0 = \sigma_1$ yields 
\begin{equation}
\label{CD1}
H^{(n+1)}(0) = \sum_{\sigma} (\sigma \cdot \sigma_0)^{2n+2} = C_n k^2 + D_n \text{.}
\end{equation}
Summing \eqref{CD} over all $\sigma_0 \in \{-1, 1\}^k$ yields 
\begin{equation}
\label{CD2}
k2^k  H^{(n)}(0) = k2^k \sum_{\sigma} (\sigma \cdot \sigma_1)^{2n} = k2^k C_n + 2^k D_n \text{.}
\end{equation}
Solving \eqref{CD1} and \eqref{CD2} for $C_n$ and $D_n$ gives 
\[
C_n = \frac{H^{(n+1)}(0) - k H^{(n)}(0)}{k^2 - k} \ , \ D_n = \frac{k^2 H^{(n)}(0) - H^{(n+1)}(0)}{k - 1} 
\]
as required.
\end{proof}
Having proved identity \eqref{eq_sum2} let us now try to solve \eqref{lagrange0} by substituting 
\[
p_{\sigma} = \frac{\mathrm{e}^{q (\sigma \cdot \sigma_1)^2}}{H(q)} \ , \ \sigma \in \{-1, 1\}^k
\]
for $q \geq 0$ and an arbitrary of choice of $\sigma_1 \in \{-1, 1\}^k$, which we guess to be the right form of solution. On the left hand side we have 
\[
q(\sigma_0 \cdot \sigma_1)^2 - \log H(q) \text{.}
\]
On the right hand side we have 
\begin{align*}
&\frac{\beta^2}{2} \sum_{\sigma} (\sigma \cdot \sigma_0)^2 \frac{\mathrm{e}^{q(\sigma \cdot \sigma_1)^2}}{H(q)} + \lambda - 1\\
= &\frac{\beta^2}{2 H(q)} \Big[ \frac{H'(q) - k H(q)}{k^2 - k} (\sigma_0 \cdot \sigma_1)^2 + \frac{k^2 H(q) - H'(q)}{k - 1} \Big] + \lambda - 1 \text{,} 
\end{align*}
using Proposition \ref{sum}. The two sides of \eqref{lagrange0} must equal for all $\sigma_0 \in \{-1, 1\}^k$ and $\lambda$ can take any real value. Thus it is necessary that 
the coefficients in front of $(\sigma_0 \cdot \sigma_1)^2$ are equal and all the remaining terms can be absorbed into $\lambda$. In other words, for 
$p_{\sigma} = \frac{\mathrm{e}^{q (\sigma \cdot \sigma_1)^2}}{H(q)}$ to solve \eqref{lagrange0} $q$ must satisfy 
\[
q = \frac{\beta^2}{2 H(q)} \Big[ \frac{H'(q) - k H(q)}{k^2 - k}\Big]= \frac{\beta^2}{2k(k-1)} \Big[ \frac{H'(q)}{H(q)} - k \Big] \text{,}
\]
which is exactly the condition satisfied by the values of $q$ in the set $S$ in \eqref{set}. Now,
\begin{align*}
F \Big( \big( \frac{\mathrm{e}^{q(\sigma \cdot \sigma_1)^2}}{H(q)} \big)_{\sigma} \Big) &= 
- \sum_{\sigma} \frac{\mathrm{e}^{q(\sigma \cdot \sigma_1)^2}}{H(q)} \log \Big( \frac{\mathrm{e}^{q(\sigma \cdot \sigma_1)^2}}{H(q)} \Big) 
+ \frac{\beta^2}{4} \sum_{\sigma, \sigma'} (\sigma \cdot \sigma')^2 \frac{\mathrm{e}^{q(\sigma \cdot \sigma_1)^2}}{H(q)} 
\frac{\mathrm{e}^{q(\sigma' \cdot \sigma_1)^2}}{H(q)}\\
&= - \frac{1}{H(q)} \sum_{\sigma} \mathrm{e}^{q(\sigma \cdot \sigma_1)^2} \big( q(\sigma \cdot \sigma_1)^2 - \log H(q)\big)\\
& \qquad\qquad + \frac{\beta^2}{4 H(q)^2} \sum_{\sigma} \mathrm{e}^{q(\sigma \cdot \sigma_1)^2} \sum_{\sigma'} 
(\sigma \cdot \sigma')^2 \mathrm{e}^{q(\sigma' \cdot \sigma_1)^2}\\
&= - \frac{H'(q)}{H(q)} q + \log H(q)\\ 
& \qquad\qquad + \frac{\beta^2}{4 H(q)^2} \sum_{\sigma} \mathrm{e}^{q(\sigma \cdot \sigma_1)^2} 
\Big[ \frac{H'(q) - k H(q)}{k^2 - k} (\sigma \cdot \sigma_1)^2 + \frac{k^2 H(q) - H'(q)}{k - 1} \Big]
\end{align*}
using \eqref{eq_sum} and the facts that $H(q) = \sum_{\sigma} \mathrm{e}^{q(\sigma \cdot \sigma_1)^2} $ and 
$H'(q) = \sum_{\sigma} (\sigma \cdot \sigma_1)^2 \mathrm{e}^{q(\sigma \cdot \sigma_1)^2}$ in the last equality.
Continuing the simplification we further get 
\begin{align*}
F \Big( \big( \frac{\mathrm{e}^{q(\sigma \cdot \sigma_1)^2}}{H(q)} \big)_{\sigma} \Big) &= - \frac{H'(q)}{H(q)}q + \log H(q)\\
& \qquad\qquad + \frac{\beta^2}{4 H(q)^2} \Big[ H'(q) \frac{H'(q) - k H(q)}{k^2 - k} + H(q) \frac{k^2 H(q) - H'(q)}{k - 1} \Big]\\
&= - \frac{H'(q)}{H(q)}q + \log H(q) + \frac{\beta^2}{4k(k-1)} \Big[ \frac{H'(q)^2}{H(q)^2}  - 2k \frac{H'(q)}{H(q)} + k^3 \Big]
\end{align*}
Then for any $q \in S$, $\frac{H'(q)}{H(q)} = \frac{2k(k-1)}{\beta^2}q + k$ and thus 
\begin{align*}  
F \Big( \big( \frac{\mathrm{e}^{q(\sigma \cdot \sigma_1)^2}}{H(q)} \big)_{\sigma} \Big) &= - \Big( 
\frac{2k(k-1)}{\beta^2}q + k \Big)q + \log H(q)\\ 
&\qquad\qquad + \frac{\beta^2}{4k(k-1)} \Big[  \Big( 
\frac{2k(k-1)}{\beta^2}q + k \Big)^2 - 2k  \Big( \frac{2k(k-1)}{\beta^2}q + k \Big) + k^3 \Big]\\
&= - \frac{k(k-1)}{\beta^2}q^2 - kq + \log H(q) + \frac{\beta^2}{4} \text{,}
\end{align*}
which maximised over $q \in S$ gives exactly Talagrand's expression \eqref{talagrand3}.
\section{Future Research}
It would be interesting to investigate the asymptotic behaviour of $\mathbb{E} Z_N(\beta)^{k(N)}$ as $N \to \infty$, where $k(N)$ grows 
with $N$. That could for example be useful in estimating the exponential moments of $Z_N(\beta)$.

Another direction for future research is to try to generalise \eqref{log_moments} and \eqref{log_moments2} to all $k \in \mathbb{R}$ 
(or at least to all $k \in \mathbb{R}^{+}$). Then as suggested in \cite{1} we can recover an alternative characterisation of the Parisi 
formula via the following limiting procedure:
\[
\lim_{N \to \infty} \frac{1}{N} \mathbb{E} \log Z_N(\beta) = \lim_{k \to 0} \frac{1}{k} \Big[
\lim_{N \to \infty} \frac{1}{N} \log \mathbb{E} Z_N(\beta)^k\Big] \text{.}
\]

\end{document}